\documentclass[11pt,reqno]{amsart}
\usepackage[utf8]{inputenc}
\usepackage{amsmath,amssymb}
\usepackage{wrapfig}
\usepackage{url}
\usepackage{mathtools}
\usepackage{graphicx}
\usepackage{stmaryrd}
\usepackage{amsthm}
\usepackage{xcolor}
\usepackage[colorlinks=true,linkcolor=blue,citecolor=blue]{hyperref}
\usepackage[shortlabels]{enumitem}
\usepackage{relsize}
\usepackage{dsfont}
\usepackage{stmaryrd}
\usepackage{geometry}
\usepackage{setspace}
 \usepackage{relsize}
 \usepackage{float}
\usepackage{mathrsfs} 
\usepackage{slashed}
\usepackage{scrextend}
\usepackage{mathtools}
\newcommand{\defeq}{\vcentcolon=}

\usepackage{makecell}
\setcellgapes{4pt}
\deffootnote{0em}{1.6em}{\enskip}
\allowdisplaybreaks

\newtheorem{theorem}{Theorem}[section]
\newtheorem{corollary}[theorem]{Corollary}

\theoremstyle{definition}
\newtheorem{obs}{Remark}
\theoremstyle{definition}

\newcommand{\C}{\mathbb{C}}
\newcommand{\R}{\mathbb{R}}

\newcommand{\N}{\mathbb{N}}

\newcommand{\sph}{\mathbb{S}^{2}}
\newcommand{\sphn}{\mathbb{S}^{n}}

\date{\today}

\author{Uwe Kähler}
\address[Uwe Kähler]{
Center for Research and Development in Mathematics and Applications (CIDMA), 
Department of Mathematics, University of Aveiro, 3810-193 Aveiro, Portugal}
  
\email{ukaehler@ua.pt}

\author{André Pedroso Kowacs}
\address[André Pedroso Kowacs]{Departamento de Matemática,
 Instituto de Ciências Matemáticas e de Computação,
 Universidade de São Paulo,
 São Paulo, Brazil}
\email{andrekowacs@gmail.com}

\author{Michael Ruzhansky}
\address[Michael Ruzhansky]{
  Department of Mathematics: Analysis, Logic and Discrete Mathematics,
  Ghent University, Belgium and 
  Queen Mary University of London, 
   United Kingdom}
  
\email{Michael.Ruzhansky@ugent.be}

\thanks{Andr\'e Pedroso Kowacs was supported, in part, by the São Paulo Research Foundation (FAPESP), Brazil,
Process Number \#2025/08151-5. Michael Ruzhansky was supported, in part, by the FWO  Odysseus  1  grant  G.0H94.18N:  Analysis  and  Partial Differential Equations, by the Methusalem programme of the Ghent University Special Research Fund (BOF)
(Grant number 01M01021) and by the FWO grant G011522N. Michael Ruzhansky was also supported by EPSRC grant 
UKRI3645 and by the FWO-FAPESP Bilateral Scientific Cooperation Research Grant G0AOZ25N. Uwe Kaehler is supported by FWO grant V500824N as well as (partially) supported by CIDMA under the Portuguese Foundation for Science and Technology 
(FCT, https://ror.org/00snfqn58)   Multi-Annual Financing Program for R\&D Units,
grants UID/4106/2025 and UID/PRR/4106/2025.}

\subjclass{Primary: 35P15. Secondary: 26D10}

\keywords{Lieb-Thirring inequalities, Spectral inequalities, Dirac Operator, Spheres}

\title[Lieb-Thirring inequalities for the Dirac operator]{Lieb-Thirring inequalities for the Dirac operator on spheres}

\begin{document}

 \begin{abstract}
 In this paper, we obtain bounds for the best  constants in two inequalities which can be seen as  analogues of the Lieb-Thirring inequality, but with the Dirac operator, on the $n-$sphere. We then apply these results in order to improve the known upper bounds on the classical Lieb-Thirring constant on the $n$-sphere for $n\geq 5$.
 \end{abstract}

 \maketitle
 \section{Introduction}

Lieb-Thirring inequalities, first introduced in \cite{Lieb2002}, offer bounds on the $\gamma$-moments of the negative eigenvalues for the Schrödinger operator:
\begin{equation}\label{deltav}
\Psi = -\Delta - V,
\end{equation}
acting in the Hilbert space $L^2(\R^n)$. These inequalities assert the existence of constants $L_{\gamma,n} > 0$ such that
\begin{equation}\label{Lieb1}
\sum_{\lambda_j \leq 0} |\lambda_j|^\gamma \leq L_{\gamma,n} \int_{\mathbb{R}^n} V(x)^{\gamma + \frac{n}{2}} dx,
\end{equation}
where $V \geq 0$ is a potential function that diminishes rapidly at infinity, and $\gamma \geq \max\{1-\frac{n}{2},0\}$. These inequalities have numerous applications in fields such as dynamical systems (\cite{application3, application2, application1}) and quantum mechanics. The exact values for $L_{\gamma,n}$, especially for $\gamma > \frac{3}{2}$ and for any $n \in \N$, were determined in \cite{sharpvalue} and are expressed as:
\begin{equation}\label{sharprn}
    L^{cl}_{\gamma,n} = \frac{\Gamma(\gamma+1)}{2^n\pi^{\frac{n}{2}}\Gamma(\gamma+n/2+1)}.
\end{equation}
This inequality is equivalent to a related inequality for orthonormal families of functions. Specifically, there is a constant $k_n > 0$ such that for any orthonormal set of functions $\{\psi_j\}_{j=1}^N \subset H^1(\R^n)$, and with $\rho(x) \defeq \sum_{j=1}^N |\psi_j(x)|^2$, we have:
\begin{equation}\label{LiebGenereal}
    \int_{\mathbb{R}^n} \rho(x)^{\frac{n+2}{n}}dx \leq k_n \sum_{j=1}^N \|\nabla \psi_j\|^2_2.
\end{equation}
The best constants $k_n$ are connected to $L_{\gamma,n}$ as follows (see \cite{application1, Lieb2002}):
\begin{equation*}
    k_n = \frac{2}{n} \left(1 + \frac{n}{2}\right)^{\frac{n+2}{n}} L^{\frac{2}{n}}_{1,n}.
\end{equation*}
These types of inequalities can also be generalized to manifolds. For a smooth manifold $M$ of dimension $n$, let $k_{M} > 0$ denote the smallest constant for which the following inequality holds for any set of orthonormal functions $\{\psi_j\}_{j=1}^N \subset H^1(M)$, with $\rho(x) \defeq \sum_{j=1}^N |\psi_j(x)|^2$:
\begin{equation}\label{LiebGenereal2}
    \int_{M} \rho(x)^{\frac{n+2}{n}}dx \leq k_{M} \sum_{j=1}^N \|\nabla \psi_j\|_{L^2}^2.
\end{equation}

However, for compact manifolds $M$, the zero eigenvalue must be considered. Thus, instead of the operator \eqref{deltav}, we use:
\begin{equation*}
    -\Delta - \Pi(V \Pi \cdot),
\end{equation*}
where {$\Delta$ now denotes the Laplace-Beltrami operator on $M$ and} $\Pi$ is the orthogonal projection given by:
\begin{equation*}
    \Pi \psi(x) = \psi(x) - \frac{1}{|M|} \int_M \psi(y) dy.
\end{equation*}
For further reading on this topic, see \cite{survey, nsphere, liebsome, hyperbolic, torus, LiebonS2, onaclass}.

In recent years Dirac operators with a potential as the natural relativistic counterpart of Laplacians received a lot of attention from the point of view of the spectral theory. These operators describe relativistic spin-1/2 particles propagating under strongly localized external potentials, such quark confinement. For instance, Dirac operators with singular electrostatic, Lorentz scalar and anomalous magnetic interactions supported on surfaces in $\mathbb{R}^3$ have been studied in~\cite{Behrndt1}, while Dirac operators with $\delta$-shell potential were treated in~\cite{Behrndt2}. These operators also appear in the study of conductivity properties of single-layer graphene structures. The $n$-dimensional Dirac operator with potential as approximations of the Hamiltonians of
interactions of relativistic quantum particles with potentials has been studied in~\cite{Rabinovich1}.

In another direction the study of spectra of Dirac operators over Riemannian manifolds is a classic topic, see, e.g.~\cite{Ginoux} or~\cite{Friedrich}. In general only estimates are known, but in special cases like the $n$-dimensional sphere explicit values for the eigenvalues of the Dirac operator $\slashed D$ are known~\cite{Bär}. 

This leads to the question if we can obtain the corresponding analogue for the Lieb-Thirring inequality for the Dirac operator $\slashed D$. Since the spectrum of the Dirac operator contains both positive and negative eigenvalues such an analogue can only be obtained for the positive or negative part of the spectrum. Therefore, our concrete problem is given by:

Does there exist a constant $K_{\mathbb{S}^n}>0$ such that 
 \begin{equation*}
     \sum_{j=1}^N\langle\Lambda^+(\slashed D)\psi_j,\psi_j\rangle_{L^2} \geq K_{\mathbb{S}^n}\int_{\sphn}\rho^{\frac{n+1}{n}}(x)dx,
\end{equation*}
for any orthonormal family $\{\psi_j\}_{j=1}^N$ of functions in $H^1(\mathbb{S}^n)$, each with mean value zero, and if so, what is the best constant $K_{\mathbb{S}^n}$? Hereby $\Lambda^+(\slashed D)$ denotes the projection onto the positive spectrum $\Lambda^+(\slashed D)=\chi_{(0,+\infty)}(\slashed D)$.  The second question is if under the same conditions, do we have a best constant $K_{\mathbb{S}^n}'>0$ such that
 \begin{equation*}
    \sum_{j=1}^N \|\slashed D\psi_j\|_{L^2}^2=\sum_{j=1}^N\langle \slashed D\psi_j, \slashed D \psi_j\rangle \geq  K_{\sphn}'\int_{\sphn}\rho^{\frac{n+2}{n}}(x)dx
\end{equation*}
and if so what is the best constant?

 The technique employed in the proofs of this paper drew significant inspiration from \cite{torus,LiebonS2}, where the authors established the upper limits $k_{\mathbb{S}^2} \leq \frac{3}{2\pi}$ and $k_{\mathbb{T}^2} \leq \frac{6}{\pi^2}$. Their approach incorporated concepts reminiscent of those initially presented by M. Rumin \cite{Rumin}, who considered the inequality in its original form \eqref{Lieb1}, within a broader context and applied to $\R^n$. This methodology was subsequently refined for the Euclidean framework in \cite{frank} and \cite{Frank2}.

Finally, making use of the fact that the Dirac operator coincides with the gradient operator for real-valued functions, we are able to estimate the classical Lieb-Thirring constant $k_{\sphn}$ from our estimates on $K_{\sphn}'$ and improve the best known upper bounds for $n\geq 5$ presented in \cite{Kow}. In particular, we prove that (see Theorem \ref{teoLiebsn_improved}):
\begin{theorem}
The best constant $k_{\sphn}$ satisfies
   \begin{equation*}
    k_{\sphn}\leq 
           \left(\frac{2n^4+16n^3+42n^2+44n+16}{n^4-6n^2+6n+8}\right)n!^{-\frac{2}{n}}2^{\frac{2}{n}(\lfloor \frac{n}{2}\rfloor+1)-1}(\sigma_{\sphn})^{-\frac{2}{n}},
    \end{equation*}
    for $n\geq 2$, where $\sigma_{\sphn}$ denotes the volume of the $n$-sphere.
\end{theorem}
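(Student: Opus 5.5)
Since the Dirac operator acts on real-valued (scalar) functions, viewed inside the Clifford-valued spinor space, as the gradient, we have $\|\slashed D\psi\|_{L^2}=\|\nabla\psi\|_{L^2}$ for such $\psi$. The plan is to deduce the bound on $k_{\sphn}$ from a Rumin-type bound on $K'_{\sphn}$, in the spirit of \cite{torus,LiebonS2}. Any orthonormal family of mean-zero scalar functions is admissible in the $\slashed D$-inequality, so $k_{\sphn}\le 1/K'_{\sphn}$. The constant term $|M|^{-1}\int\psi$ is handled by the projection $\Pi$, exactly as in the classical setting.

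The main work is therefore a lower bound on $K'_{\sphn}$, which I would obtain as follows.

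\textbf{Spectral decomposition.} Use Bär's explicit spectrum \cite{Bär} of $\slashed D$ on $\sphn$: the eigenvalues are $\pm(n/2+k)$, $k\ge0$, with multiplicities $2^{\lfloor n/2\rfloor}\binom{k+n-1}{k}$. The factor $2^{\lfloor n/2\rfloor}$ is the spinor dimension, which is where the power $2^{\frac{2}{n}(\lfloor n/2\rfloor+1)}$ will come from (the extra $+1$ accounting for both signs).

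\textbf{Rumin layer-cake identity.} Write
\[
\|\slashed D\psi\|^2=\int_0^\infty \|P_E^\perp\psi\|^2\,dE,
\qquad P_E=\chi_{[0,E)}(\slashed D^2).
\]
Then bound pointwise
\[
\rho(x)^{1/2}\le \Big(\sum_j|P_E^\perp\psi_j(x)|^2\Big)^{1/2}+\Big(\sum_j|P_E\psi_j(x)|^2\Big)^{1/2}.
\]
By the Bessel-type inequality for orthonormal families and rotational invariance (the addition theorem for spinor spherical harmonics), the second term is at most $\big(N(E)/\sigma_{\sphn}\big)^{1/2}$, where $N(E)$ is the eigenvalue counting function of $\slashed D^2$. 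Hence $\sum_j|P_E^\perp\psi_j(x)|^2\ge \big(\sqrt{\rho}-\sqrt{N(E)/\sigma}\big)_+^2$. Integrating in $E$ and then in $x$ gives a functional of $\rho$ alone.

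\textbf{Bounding the counting function.} Bound $N(E)$ above by a polynomial in $E$ of degree $n/2$, of the form $C\,2^{\lfloor n/2\rfloor+1}E^{n/2}/n!$. This is the source of the factor $n!^{-2/n}$ and, after optimisation, of the rational prefactor in $n$. The key point is to compare the sum of the binomial multiplicities with an explicit polynomial majorant uniformly in the shift $n/2$. Substituting and computing $\int_0^\infty(\sqrt\rho-\sqrt{aE^{n/2}})_+^2\,dE$ explicitly gives
\[
c\,\rho^{1+2/n}a^{-2/n},
\]
with $c$ a beta-type constant depending on $n$.

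\textbf{Main obstacle.} I expect the hardest step to be the $N(E)$ majorization with lower-order corrections. The discrete jumps and the $n/2$ shift produce the odd-looking rational factor $(2n^4+\dots)/(n^4-6n^2+6n+8)$. I would try first to bound the step function by a shifted power $a(E^{1/2}+b)^{n}$, optimise $b$, and then absorb the constants. The mean-zero condition removes the lowest eigenspace contributions, and the validity for $n\ge2$ requires checking that the denominator is positive.
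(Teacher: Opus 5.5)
\textbf{The transfer step fails.} Your whole argument rests on $\|\slashed D\psi\|_{L^2}=\|\nabla\psi\|_{L^2}$ for scalar $\psi$, and this is false for $n\ge 2$. For scalar $\psi$, $\Gamma\psi=-\sum_{j<k}e_{jk}L_{jk}\psi$ is purely bivector-valued, and by Lagrange's identity $|\Gamma\psi|=|\nabla\psi|$ pointwise. But $\slashed D=\Gamma-\frac{n-1}{2}$ also contains the scalar part $-\frac{n-1}{2}\psi$, which is pointwise orthogonal to $\Gamma\psi$. Hence
\[
\|\slashed D\psi\|^2=\|\nabla\psi\|^2+\tfrac{(n-1)^2}{4}\|\psi\|^2 .
\]
This is just the Weitzenb\"ock identity $\slashed D^2=(\frac{n-1}{2})^2-\Delta_{LB}$ from the preliminaries. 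For example, with $n=2$ and $\psi=x_3$ one has $\|\nabla\psi\|^2=\frac23$ but $\|\slashed D\psi\|^2=\frac34$ in the normalised measure.

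So $\|\slashed D\psi\|\ge\|\nabla\psi\|$, which is the wrong direction. A lower bound on $\sum_j\|\slashed D\psi_j\|^2$ does not by itself give one on $\sum_j\|\nabla\psi_j\|^2$, and $k_{\sphn}\le 1/K'_{\sphn}$ does not follow. The paper makes exactly this identification in Corollary~\ref{coro_scalar}, so following it does not close the gap. The natural repair is the spectral gap $\|\psi_j\|^2\le\frac1n\|\nabla\psi_j\|^2$ for mean-zero $\psi_j$. It only gives $\sum_j\|\nabla\psi_j\|^2\ge\frac{4n}{(n+1)^2}K'_{\sphn}\int\rho^{1+2/n}$, i.e.\ the stated bound multiplied by $\frac{(n+1)^2}{4n}$. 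To reach the stated constant you need either a different transfer argument or a correspondingly better Dirac-side constant.

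\textbf{The constant is never produced.} Separately, your sketch never yields an explicit constant, and the rational factor does not come from optimising a shift. The paper works with eigenvalues $E$ of $|\slashed D|$, layer-cake weight $2E$, and the normalised measure. It proceeds as follows. It majorises the counting function by $\frac{2^{\lfloor n/2\rfloor+1}}{n!}(E+\frac12)^n$ for $E>\frac n2$ and by $0$ otherwise, using the hockey-stick identity plus AM--GM. After rescaling $\rho$ to $\rho_n$, it computes the layer-cake integral explicitly for $\rho_n>(\frac{n+1}{2})^n$ and writes it as $q(y)+R(y)$ with $y=\rho_n^{-1/n}$, discarding $R>0$. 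For $n\ge 2$ the vertex $\frac{2n^2}{(n+1)(n+2)}$ of the parabola $q$ lies at or beyond the endpoint of the admissible range $0<y<\frac{2}{n+1}$. The infimum is therefore $q(\frac{2}{n+1})=\frac{n^4-6n^2+6n+8}{2(n+1)^2(n+2)(n+4)}=c_n'$, and the range $\rho_n\le(\frac{n+1}{2})^n$ is handled separately.

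That endpoint value is the rational factor, and the vertex location is where $n\ge2$ enters: for $n=1$ the vertex $\frac13$ is interior, giving $\frac{7}{360}$. Positivity of $n^4-6n^2+6n+8$ holds for all $n\ge1$ and plays no role.

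Your pure-power route is in fact viable for the Dirac-side inequality, and simpler, once $C$ is fixed. In the normalised measure, $N(E)\le\frac{2^{\lfloor n/2\rfloor+1}}{n!}(\sqrt E+\frac12)^n$ for $\sqrt E>\frac n2$ and $N(E)=0$ otherwise. Since $\sqrt E+\frac12\le(1+\frac1n)\sqrt E$ for $\sqrt E\ge\frac n2$, you may take $C=(1+\frac1n)^n$. Your integral $\frac{n^2}{(n+2)(n+4)}\rho^{1+2/n}a^{-2/n}$ then yields the constant $\frac{n^4}{(n+1)^2(n+2)(n+4)}\,n!^{2/n}2^{-\frac2n(\lfloor n/2\rfloor+1)}$. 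This exceeds the paper's $c_n'\,n!^{2/n}2^{1-\frac2n(\lfloor n/2\rfloor+1)}$ for $n\ge2$. Even so, the transfer from $\|\slashed D\psi\|$ to $\|\nabla\psi\|$ above remains the missing step.
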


\section{Preliminaries}

 For $d\in\N$, let $\mathbb{R}_{0,d}$  be the $2^d$-dimensional universal real Clifford algebra  over $\mathbb{R}^d$  constructed from the basis $\{e_1, e_2,\ldots, e_d\}$ under the usual relations$$
e_ke_l+e_le_k=-2\delta_{kl},~ 1\leq k, l\leq d,
$$
where $\delta_{kl}$ is the Kronecker delta function. An element $f\in\mathbb{R}_{0,d}$ can be represented as  $f=\sum_{A}f_Ae_A, f_A\in\mathbb{R},$ where $e_A=e_{j_1j_2\ldots j_k}=e_{j_1}e_{j_2}\ldots e_{j_k}$, $A=\{j_1,j_2,\ldots j_k\}$ with $1\leq j_1\leq j_2 \leq \cdots\leq j_k\leq d$, and $e_0=e_\emptyset=1$ is the  identity element of $\mathbb{R}_{0,d}$. Also consider the complexified Clifford algebra $\C_d=\R_{0,d}\otimes \C$. The elements of the algebra $\mathbb{R}_{0,d}$ for which $|A|=k$ are called $k$-vectors, and similarly for $\C_d$. We denote the space of all $k$-vectors by
\begin{equation*}
\mathbb{R}_{0,d}^k  :=   \text{ span}_{\mathbb{R}}\{e_A:|A|=k\},
\end{equation*}
and analogously $\C_d^k$ for complex $k$-vectors.

 For $\mathbb{K}=\R$ or $\mathbb{K}=\C$, it is clear that the spaces $\mathbb{K}$ and $\mathbb{K}^d$ can be identified with $\mathbb{K}_{0,d}^0$ and  $\mathbb{K}_{0,d}^1,$ respectively. Elements of the space $\mathbb{K}_{0,d}^0\oplus\mathbb{K}_{0,d}^1$ are also called para-vectors (scalar+vector). 
 
 We define the conjugation as an anti-automorphism on $\R_{0,d}$ and $\C_d$ by
 \begin{equation*}
     f\mapsto\overline{f}=\sum_A\overline{f_A}\overline{e_A},
 \end{equation*}
 where the conjugation on the basis elements $e_A$ is defined by the properties: $\overline{e_j}=-e_j,$ for $j=1,\ldots, d$, and  $\overline{ab}=\overline{b}\overline{a}$ for every $a,b\in\mathbb{R}_{0,d}$ or $\C_d$, respectively. 

In what follows, we will require two types of scalar products. First, we introduce the (complex-valued) scalar product of  $a,b\in\mathbb{R}_{0,d}$ or $\C_d$, as the scalar part of their geometric product  
\begin{equation*}
a \cdot b := [\overline{a}{b}]_0=\sum_{A}\overline{a_A} {b_A}. 
\end{equation*} As usual, when we set $a=b$ we obtain the square of the modulus (or magnitude) of the multivector $a$:
\begin{equation*}
|a|^2=[\overline{a}a]_0=\sum_{A}|a_A|^2.
\end{equation*}

Let $\Omega\subset \mathbb{R}^n$ be a (bounded or unbounded) domain. Any function $f:\Omega\to\mathbb{C}_{n}$ can be represented by $f=\sum_A e_Af_A$ with complex-valued component functions $f_A:\Omega\to\mathbb{\C}$. Thus notations such as $f\in C^k(\Omega,\mathbb{C}_{d})$, $L^p(\Omega, \mathbb{C}_{d})$, 
can be understood both co-ordinate-wisely and directly.

In particular, we denote by  $L^p(\Omega, \mathbb{C}_{d})$ the right-linear module of all Clifford-valued functions $f:\Omega\rightarrow\mathbb{C}_{d}$
with finite norm 
\begin{equation*}
\| f \|_{L^p(\C_{d})}=\left\{\begin{array}{ll}
\left( \int_{\Omega}|f(\mathbf{x})|^p d^n\mathbf{x} \right)^{\frac{1}{p}}, & {1\leq p<\infty}, \\
{\rm ess} \sup_{\mathbf{x}\in\Omega}|f(\mathbf{x})|, & {p=\infty}, \\
\end{array}\right.
\end{equation*} where $d^n\mathbf{x}=dx_1dx_2\ldots dx_n$ represents the usual Lebesgue measure in $\mathbb{C}^n$. In the case of $p=2,$ we shall also denote this norm by $\| f\|$. 

Given two functions $f, g \in L^2(\Omega,\mathbb{C}_{d}),$ we define a Clifford-valued  sesqui-linear form via
$$ ( f,g):=\int_{\Omega}{\overline{f(\mathbf{x})}g(\mathbf{x})}d^n\mathbf{x},$$
from which we  construct the scalar inner product
\begin{equation}\label{scalar inner product}
\langle f,g\rangle:\,=\left[(f,g)\right]_0=\int_{\Omega}[\overline{f(\mathbf{x})}g(\mathbf{x})]_0d^n\mathbf{x}.
\end{equation} We remark that (\ref{scalar inner product}) satisfies the (Clifford-) Cauchy-Schwarz inequality
 \begin{equation*}
|\langle f,g\rangle|\leq K_d\|f\|\|g\|,\quad \forall f, g\in L^2(\Omega,\mathbb{C}_{d}),
\end{equation*}
with best constant $K_d$ given by (see \cite{best_constants_complex,best_constants}):
\begin{equation*}
    K_d=\begin{cases}
2^{d/4} & \text{if } d \text{ is even},\\
2^{(d+1)/4} & \text{if } d \text{ is odd.}
\end{cases}
\end{equation*}

Let us remark that in the special case where $f$ or $g$ are either vector- or para-vector-valued functions we have the classical Cauchy-Schwarz inequality
$$
|\langle f,g\rangle|\leq \|f\|\|g\|.
$$
We also need to point out that in general both inner products are needed. While $\langle \cdot,\cdot\rangle$ gives rise to a norm, for questions of duality the inner product $(\cdot,\cdot)$ is needed. In particular, in the case of $\Omega=\sphn$ we have the spaces $L^p(\sphn,\C_{d})$, where $\sphn$ denotes the $n$-dimensional sphere. As a compact manifold without boundary, for $s\in\R$ we also consider the corresponding Sobolev spaces $H^s(\sphn,\C_d)=W^{s,2}(\mathbb{S}^n,\C_d)$, which as before can be understood both co-ordinate-wisely and directly.

    
In this paper we will consider the spherical Dirac operator~\cite{Bär}. For this we will introduce the Gamma operator
$$
\Gamma=-\sum_{j<k}e_{jk}L_{jk}=-\sum_{j<k}e_{jk}(x_j\partial_{x_k}-x_k\partial_{x_j})
$$
where $L_{jk}=x_j\partial_{x_k}-x_k\partial_{x_j}$ are the angular momentum operators. Using this operator the spherical Dirac operator over $\mathbb{S}^n$ has the form
$$
\slashed D=\Gamma-\frac{n-1}{2}.
$$
In particular, we have that its square satisfies
$$
\slashed D^2=(\Gamma-\frac{n-1}{2})^2=(\frac{n-1}{2})^2-\Gamma(n-1-\Gamma)=(\frac{n-1}{2})^2-\Delta_{LB},
$$
where $\Delta_{LB}$ denotes the Laplace-Beltrami operator over the sphere. In the notation of~\cite{Bär} this operator corresponds to the operator $D+\mu$ with $\mu=1/2$ and the above identity is known as the Weitzenböck formula (Lemma 2 in~\cite{Bär}).
\section{Main Results}

 In this section we present the main results obtained in this paper. Their proofs can be found in Section \ref{section_proofs}. 

 For the rest of this paper, let $d\in\N$ be fixed. Also, we denote by $\nabla$ the spherical gradient, that is, $\nabla  \equiv \nabla_{\sphn}$.

\begin{theorem}\label{teos2}
 For any orthonormal family $\{\psi_j\}_{j=1}^N\subset H^{1}(\sph,\C_d)$, such that $\int_{\sph} \psi_j=0$, for $1\leq j\leq N$, we have that
    \begin{equation*}
     \sum_{j=1}^N\langle\Lambda^+(\slashed D)\psi_j,\psi_j\rangle \geq \frac{1}{3}\int_{\sph}\rho(x)^{\frac{3}{2}}dx,
\end{equation*}
where
$\rho(x)\defeq\sum_{j=1}^N|\psi_j(x)|^2,$ and  $\Lambda^+(\slashed D)=\chi_{(0,+\infty)}(\slashed D)$ is the projection onto the positive spectrum of the Dirac operator. 
\end{theorem}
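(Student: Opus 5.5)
The argument follows the Rumin-type method of \cite{Rumin,LiebonS2,torus}. We use the spectral decomposition of $\slashed D$ on $\sph$ \cite{Bär}: on spinor-valued functions the eigenvalues are $\pm(k+1)$, $k\geq 0$, each with multiplicity $2(k+1)$ per spinor component. The relevant multiplicity, once the Clifford-valued setting with $\C_d$ is accounted for, has to be tracked carefully. Let $P_k^\pm$ denote the orthogonal projections onto the eigenspaces with eigenvalues $\pm(k+1)$. By the addition theorem, $\sum_{\text{basis}}|\phi(x)|^2$ over each eigenspace is constant in $x$ and equals $\dim/|\sph| = \dim/(4\pi)$.

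\textbf{Layer-cake representation.} Write $\Lambda^+(\slashed D)\slashed D=\int_0^\infty \chi_{(E,\infty)}(\slashed D)\,dE$. Restricted to the positive part, the quantity on the left-hand side, weighted by $\slashed D$, then becomes $\sum_j\int_0^\infty\|\chi_{(E,\infty)}(\slashed D)\psi_j\|^2\,dE$. We interpret the statement's left-hand side in this form, i.e.\ as $\sum_j\langle \slashed D_+\psi_j,\psi_j\rangle$. For each $E>0$ split
\[
\psi_j=\chi_{(E,\infty)}(\slashed D)\psi_j+\chi_{(-\infty,E]}(\slashed D)\psi_j=:\psi_j^{E,>}+\psi_j^{E,\leq}.
\]
The mean-zero condition plays its role here in removing the low modes.

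\textbf{Pointwise bound on the low part.} Next we bound $\sum_j|\psi_j^{E,\leq}(x)|$ pointwise. Pair with a Clifford unit vector $a$ and use orthonormality (Bessel's inequality) together with the addition theorem. This gives
\[
\Big(\sum_j|\psi^{E,\leq}_j(x)|^2\Big)^{1/2}\lesssim \Big(\frac{\#\{\text{eigenvalues}\leq E\}}{4\pi}\Big)^{1/2}\approx \frac{E}{\sqrt{4\pi}}\cdot c,
\]
since the counting function grows like $E^2$ in dimension $2$. Then the triangle inequality gives
\[
\sqrt{\rho(x)}\leq \Big(\sum_j|\psi_j^{E,>}(x)|^2\Big)^{1/2}+cE.
\]
Hence
\[
\sum_j|\psi_j^{E,>}(x)|^2\geq \big(\sqrt{\rho(x)}-cE\big)_+^2 .
\]

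\textbf{Integration in $E$ and the main obstacle.} Integrating over $E$ and $x$,
\[
\int_0^\infty(\sqrt\rho-cE)_+^2\,dE=\frac{\rho^{3/2}}{3c}.
\]
The main obstacle is getting the constant exactly $1/3$, i.e.\ $c=1$. This requires computing the eigenspace dimensions so that the partial sums $\sum_{k+1\leq E}\dim/(4\pi)$ are bounded by $E^2$. Since the eigenvalues are discrete, the bound must hold for all real $E$, not only asymptotically; one checks it at the jumps $E=k+1$, where the sum is $\sim (k+1)(k+2)/(4\pi)\cdot(\text{multiplicity factor})$.

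A second delicate point is the Clifford-valued Cauchy--Schwarz constant $K_d$. To avoid losing it, I would reduce to scalar components by choosing the test vector as the $\C_d$-valued unit $\overline{\psi^{E,\leq}(x)}/|\cdot|$, so that the classical inequality suffices. I expect the counting at low $E$ to need a slightly shifted or cut-off layer-cake to land exactly on $1/3$.
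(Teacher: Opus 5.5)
Your architecture (layer-cake in $E$, splitting at energy $E$, a pointwise bound on the low-energy part from the addition formula, the triangle inequality, then integrating $(\sqrt\rho-\sqrt{C(E)})_+^2$ in $E$) is the paper's. Reading the left side as $\langle\slashed D\,\chi_{(0,\infty)}(\slashed D)\psi,\psi\rangle$ is also what the paper's proof actually bounds. The gap is the step you call the main obstacle, and the route you suggest for it fails.

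In the normalization in which $\frac13$ is meant (normalized measure, so $\sum_j|y_k^{j+}(x)|^2=2(k+1)$ as in the paper), the pointwise count of positive modes below $E$ is the step function $C(E)=m(m+1)$ for $m<E\le m+1$. This exceeds $E^2$ on $(m,\sqrt{m(m+1)})$ for every $m\ge1$. The best uniform constant is $c=\sqrt2$, which yields only $\frac{1}{3\sqrt2}$.

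Shifting does not rescue it. The majorant $C(E)\le E^2+E$ gives $\int_0^\infty(\sqrt\rho-\sqrt{E^2+E})_+^2\,dE=\frac13\rho^{3/2}-\frac12\rho+O(\sqrt\rho\log\rho)$, which stays below $\frac13\rho^{3/2}$ for large $\rho$. A cut-off at low $E$ recovers only $O(\sqrt\rho)$ of that deficit. With your $\dim/(4\pi)$ normalization the bound $m(m+1)/(4\pi)\le E^2$ does hold, but then you prove the inequality for the measure of total mass $4\pi$. In normalized terms that is the constant $\frac{1}{3\sqrt{4\pi}}$, not $\frac13$.

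The missing idea is to keep the exact step function. For $M(M+1)\le\rho<(M+1)(M+2)$,
\[
I(\rho)=\sum_{m=0}^{M}\bigl(\sqrt\rho-\sqrt{m(m+1)}\bigr)^2,\qquad 2\sum_{m=0}^{M}\sqrt{m(m+1)}\le(M+1)\sqrt{M(M+2)},
\]
the second by telescoping from $\sqrt{(m+1)(m+2)}-\sqrt{m(m-1)}>2$. This gives $I(\rho)\ge\frac12(M+1)\bigl(\rho-\frac13M(M+2)\bigr)$. Minimizing over each interval then gives $I(\rho)/\rho^{3/2}\ge\frac{2M+3}{6\sqrt{(M+1)(M+2)}}>\frac13$. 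Since $I(\rho)/\rho^{3/2}\to\frac13$, no comparison that loses at order $\rho$ can reach $\frac13$.

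Your splitting is also unusable as written. The operator $\chi_{(-\infty,E]}(\slashed D)$ contains the whole negative spectrum, so the ``low'' part has infinite rank and the addition formula gives no pointwise bound for it. The mean-zero condition removes only constants. This is a missing hypothesis, not a technicality: if $\psi_1$ is a normalized mean-zero eigenfunction with negative eigenvalue, the left side vanishes while the right side is positive.

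The paper's proof builds $P_E,P_E^\perp$ from the positive eigenfunctions only, so $P_E+P_E^\perp$ is the projection onto the positive eigenspaces. Its step $\Gamma_\psi\chi_B=\Gamma_\psi P_E\chi_B+\Gamma_\psi P_E^\perp\chi_B$ therefore tacitly uses $\psi_j\in\operatorname{Ran}\chi_{(0,\infty)}(\slashed D)$. Under that assumption, split as $\chi_{(0,E)}(\slashed D)\psi_j+\chi_{[E,\infty)}(\slashed D)\psi_j$. Your Bessel/addition-formula bound is then the paper's estimate $\|\Gamma_\psi P_E\chi_B\|_{HS}^2\le|B|\,C(E)$. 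No $K_d$ appears, since $\langle f,g\rangle=\int\sum_A\overline{f_A}g_A$ is an ordinary complex inner product.

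Finally, your caution about multiplicities is warranted. The paper's $2(k+1)$ is the spinor multiplicity, but right multiplication by $\C_d$ commutes with $\slashed D$. For instance, $x\mapsto\underline{x}\,a$ with $\underline{x}=\sum_j x_je_j$ and $a\in\C_d$ already spans $2^d$ complex dimensions in the lowest positive eigenspace. With the true complex dimensions of the eigenspaces, $C(E)$ is multiplied by $2^{d-1}$, and the same argument gives only $\frac13\,2^{-(d-1)/2}$.
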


Next, we generalize Theorem \ref{teos2} for the $n$-sphere, where $n\in\N$ is arbitrary. The case $n=2$ was treated separately in order to obtain better estimates in this case of great importance in physics and mathematics.

\begin{theorem}\label{teosn}
For any orthonormal family $\{\psi_j\}_{j=1}^N \subset H^{1}(\sphn,\C_d)$ and $\int_{\sphn}\psi_j=0$ for $1\leq j\leq N$, we have that
    \begin{equation*}
     \sum_{j=1}^N\langle\Lambda^+(\slashed D)\psi_j,\psi_j\rangle \geq   {{n!}^{\frac{1}{n}}2^{-\frac{1}{n}\lfloor \frac{n}{2}\rfloor}}c_n\int_{\sphn}\rho^{\frac{n+1}{n}}(x)dx,
\end{equation*}
where 
$\rho(x)\defeq\sum_{j=1}^N|\psi_j(x)|^2,$  $\Lambda^+(\slashed D)=\chi_{(0,\infty)}(\slashed D)$  is the projection onto the positive spectrum of the Dirac operator and where $c_n$ is given by:
     \begin{equation}
     \begin{aligned}\label{eq_mn}
         c_n&=\frac{n^2}{n^2+3n+2}-\frac{(\sqrt{n+1}-\sqrt{n})^{\frac{2}{n}}}{{(n+1)^{\frac{n+1}{n}}}}\frac{n(2\sqrt{n(n+1)}-n)}{n^2+3n+2},
         \end{aligned}
     \end{equation}
      and the integration is taken with respect to the normalized standard surface measure on the sphere.
\end{theorem}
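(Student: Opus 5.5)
The plan is a Rumin-type argument adapted to the spectral decomposition of $\slashed D$. Write $\slashed D_+=\slashed D\,\Lambda^+(\slashed D)$ and use the layer-cake representation
\[
\sum_{j=1}^N\langle\Lambda^+(\slashed D)\psi_j,\psi_j\rangle\;\ge\;\sum_{j=1}^N\langle \slashed D_+\psi_j,\psi_j\rangle \cdot(\text{normalisation}) ,\qquad \langle \slashed D_+\psi,\psi\rangle=\int_0^\infty \|P^{>E}\psi\|^2\,dE .
\]
Here $P^{>E}=\chi_{(E,\infty)}(\slashed D)$ and $P^{\le E}=\chi_{(0,E]}(\slashed D)$. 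The comparison between $\Lambda^+(\slashed D)$ and $\slashed D_+$ uses that the positive spectrum of $\slashed D$ is bounded below on mean-zero functions.

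The spectrum is explicit. Decompose $L^2(\sphn,\C_d)$ into inner and outer spherical monogenics, i.e.\ $M_k$ and $xM_k$, using the Weitzenböck identity $\slashed D^2=(\frac{n-1}{2})^2-\Delta_{LB}$. On $M_k$ the operator $\Gamma$ acts as $-k$, and on $xM_k$ it acts as $k+n-1$. Hence the positive eigenvalues of $\slashed D$ are $k+\frac{n-1}{2}$, with multiplicity proportional to $\binom{k+n-1}{k}$. The mean-zero hypothesis removes the constants, so only the relevant $k$ contribute.

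For fixed $x$ and $E$, expand pointwise: $|P^{>E}\psi_j(x)|\ge |\psi_j(x)|-|P^{\le E}\psi_j(x)|$. Summing over $j$ and applying Cauchy--Schwarz in $\ell^2_j$ gives
\[
\sum_j|P^{>E}\psi_j(x)|^2\ \ge\ \Big(\sqrt{\rho(x)}-\sqrt{A(E)}\Big)_+^2 ,\qquad A(E)\defeq\sup_x\sum_j|P^{\le E}\psi_j(x)|^2 .
\]
By orthonormality of the $\psi_j$ (Bessel's inequality applied to the reproducing kernel of $P^{\le E}$), $A(E)$ is bounded by the diagonal of the spectral kernel. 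By rotation invariance and the addition theorem this diagonal equals the number of eigenfunctions with eigenvalue in $(0,E]$, divided by the normalised volume. Here one must use the scalar product $\langle\cdot,\cdot\rangle$ carefully: the Clifford-valued structure contributes a factor coming from the spinor dimension. This is the source of $2^{\lfloor n/2\rfloor}$, and I would track it via the paravector/vector case of Cauchy--Schwarz.

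Next, bound the counting function by a clean power, $A(E)\le a\,E^n$ with $a=2^{\lfloor n/2\rfloor}/n!$, at least for $E$ above the bottom of the spectrum. For such a power, integrating over $E$ up to $E_0=(\rho/a)^{1/n}$ gives
\[
\int_0^{E_0}\big(\sqrt\rho-\sqrt a\,E^{n/2}\big)^2dE=\rho^{\frac{n+1}{n}}a^{-\frac1n}\Big(1-\tfrac{4}{n+2}+\tfrac{1}{n+1}\Big)=\frac{n^2}{n^2+3n+2}\,a^{-\frac1n}\rho^{\frac{n+1}{n}} .
\]
This produces the leading term of $c_n$ and the prefactor $n!^{1/n}2^{-\lfloor n/2\rfloor/n}$.

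The main obstacle is that the true counting function is a step function with lower-order terms, not exactly $aE^n$. I would handle this by splitting the $E$-integral at a threshold. Below the first positive eigenvalue one has $A(E)=0$, so there the integrand is simply $\rho$. Above the threshold, compare the step function with a shifted power such as $a(E+c)^n$ and optimise the shift. I expect this optimisation (equivalently, choosing the cut point) to produce the correction term in \eqref{eq_mn}, involving $(\sqrt{n+1}-\sqrt n)^{2/n}$ and $2\sqrt{n(n+1)}-n$. So the last step is a one-variable calculus computation after establishing a sharp elementary inequality of the form $\sum_{k\le K}\binom{k+n-1}{k}\le (K+\text{const})^n/n!$.
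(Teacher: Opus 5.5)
Your skeleton matches the paper's proof: the layer cake in $E$, the pointwise bound $\sum_j|P^{>E}\psi_j(x)|^2\ge(\sqrt{\rho(x)}-\sqrt{A(E)})_+^2$, Bessel plus the addition formula to bound $A(E)$ by the eigenvalue count, an AM--GM bound on that count, and a split of the $E$-integral at the bottom of the spectrum. Your pointwise Bessel estimate is a clean equivalent of the paper's $\|\Gamma_\psi P_E\chi_B\|_{HS}\le\|P_E\chi_B\|_{HS}$ followed by $|B|\to0$.

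\textbf{The opening step fails.} A lower bound $\lambda\ge\frac n2$ on the positive spectrum gives $\slashed D_+\ge\frac n2\,\chi_{(0,\infty)}(\slashed D)$, which is the \emph{reverse} of the comparison you need. No bound $\chi_{(0,\infty)}(\slashed D)\ge c\,\slashed D_+$ with $c>0$ exists, because the spectrum is unbounded. In fact, if $\Lambda^+(\slashed D)$ is literally the spectral projection, the inequality is false. The left side is at most $N$, while $\int_{\sphn}\rho\,dx=N$ and Jensen's inequality (normalised measure) make the right side at least $(n!)^{1/n}2^{-\lfloor n/2\rfloor/n}c_nN^{(n+1)/n}$. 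The paper's proof has no comparison step. It writes $\Lambda^+(\slashed D)_0=\sum_j\lambda_j\langle\cdot,y_j\rangle y_j$, i.e.\ it uses $\Lambda^+(\slashed D)$ in the sense $\slashed D\,\chi_{(0,\infty)}(\slashed D)$, which is the reading consistent with the exponent $\frac{n+1}{n}$. The left side then \emph{equals} $\int_0^\infty\|P_E^\perp\psi\|^2\,dE$, with no normalisation factor; any factor $\neq1$ would change the constant. Note also that your splitting $\psi_j=P^{>E}\psi_j+P^{\le E}\psi_j$, like the paper's use of $P_E+P_E^\perp$, requires each $\psi_j$ to lie in the range of $\chi_{(0,\infty)}(\slashed D)$. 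A component in the negative spectral subspace contributes to $\rho$ but not to the left side, so this hypothesis must be stated explicitly.

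\textbf{The constant $c_n$ is not derived, and the mechanism you anticipate does not produce it.}
\begin{itemize}
\item The bound $A(E)\le aE^n$ is false: for $n=2$ at $E=K+1$ the count is $(K+1)(K+2)>(K+1)^2=aE^2$.
\item There is no shift to optimise. The hockey-stick identity and AM--GM give $\sum_{k=0}^{K}\binom{k+n-1}{k}=\binom{K+n}{n}\le\frac{1}{n!}\bigl(K+\frac{n+1}{2}\bigr)^n$. Hence $A(E)\le a(E+\frac12)^n$ for $E>\frac n2$ and $A(E)=0$ below, and the shift $\frac12$ cannot be lowered for large $K$.
\item The correction term comes from minimising over the value of the density. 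With $\rho_n=\rho/a$, the $E$-integral is $a\,J(\rho_n)$. Here $J(\rho_n)=\frac n2\rho_n$ for $\rho_n\le(\frac{n+1}{2})^n$, and $J(\rho_n)=\frac n2\rho_n+\int_{n/2}^{\rho_n^{1/n}-1/2}\bigl(\rho_n^{1/2}-(E+\frac12)^{n/2}\bigr)^2\,dE$ otherwise.
\item On the second range, the derivative of $J(x)/x^{\frac{n+1}{n}}$ has the sign of $2^nu^2-2^{\frac{n+2}{2}}(n+1)^{\frac{n+2}{2}}u+(n+1)^{n+1}$ with $u=\sqrt x$. So the minimum is at $x_+=(\frac{n+1}{2})^n(n+1)(\sqrt{n+1}+\sqrt n)^2$, and its value is $c_n$.
\item On the first range the ratio is $\frac n2\rho_n^{-1/n}\ge\frac{n}{n+1}>c_n$. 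Rescaling gives $a^{-1/n}c_n$, the stated constant.
\end{itemize}
Your value $\frac{n^2}{n^2+3n+2}$ is only the $\rho\to\infty$ limit of this ratio.

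\textbf{The spectral data must be exact.} On $\sphn\subset\R^{n+1}$, $\Gamma$ acts on $xM_k$ by $k+n$, and the positive eigenvalues are $\frac n2+k$ with multiplicity $m_k=2^{\lfloor n/2\rfloor}\binom{k+n-1}{k}$. Your $k+\frac{n-1}{2}$ comes from $\Gamma=k+n-1$, i.e.\ ambient $\R^n$. That is the convention behind $\slashed D=\Gamma-\frac{n-1}{2}$ in the preliminaries, and it gives the spectrum of $\mathbb{S}^{n-1}$. Combined with the multiplicity $\binom{k+n-1}{k}$, it would give $A(E)\le a(E+1)^n$ above $\frac{n-1}{2}$ and replace $J(\rho_n)$ by $J(\rho_n)-\frac12\rho_n$. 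The result would be a constant strictly below $c_n$.

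Finally, the factor $2^{\lfloor n/2\rfloor}$ enters only through $m_k$ in the addition formula $\sum_j|y_k^{j+}(x)|^2=m_k$. No Clifford Cauchy--Schwarz constant plays a role.
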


\begin{theorem}\label{teosn2}
For any orthonormal family $\{\psi_j\}_{j=1}^N \subset  H^{1}(\sphn,\C_d)$ and $\int_{\sphn}\psi_j=0$ for  $1\leq j\leq N$, we have that
    \begin{equation*}
    \sum_{j=1}^N \|\slashed D\psi_j\|_{L^2(\C_d)}^2=\sum_{j=1}^N\langle \slashed D\psi_j, \slashed D \psi_j\rangle \geq  c_n'n!^{\frac{2}{n}}2^{{1}-\frac{2}{n}(\lfloor\frac{n}{2}\rfloor+1)}\int_{\sphn}\rho^{\frac{n+2}{n}}(x)dx,
\end{equation*}
where
$\rho(x)\defeq\sum_{j=1}^N|\psi_j(x)|^2$, 
\begin{equation}\label{eq_cn'}
    c_n'=\begin{cases}
           \frac{n^4-6n^2+6n+8}{2n^4+16n^3+42n^2+44n+16},&\text{ if }n\geq 2,\\
           \frac{7}{360}&\text{ if } n=1,
    \end{cases}
\end{equation}
and the integration is taken with respect to the normalized standard surface measure on the sphere.
\end{theorem}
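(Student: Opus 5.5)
We follow the Rumin-type strategy of \cite{LiebonS2,torus}, adapted to $\slashed D^2$. The first step is to write the spectral decomposition of $\slashed D$ on $\sphn$ from \cite{Bär}. The eigenvalues are $\pm(k+\frac n2)$, $k\ge 0$, and each eigenspace $E_k^\pm$ has Clifford-valued multiplicity controlled by $\dim\mathcal H_k$ (spherical harmonics) times the spinor dimension $2^{\lfloor n/2\rfloor}$ (accounting for the $\C_d$-module structure). By the Weitzenböck formula, $\slashed D^2=(\frac{n-1}{2})^2-\Delta_{LB}$, so its eigenvalues are $(k+\frac n2)^2$. Let $P_k$ be the spectral projection of $\slashed D^2$ onto $(k+\frac n2)^2$ and $\Pi$ the projection removing the mean. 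By the addition theorem and rotation invariance, $\sum_{i}|(P_k\phi_i)(x)|^2$ over an orthonormal basis has constant trace density $\dim(\mathrm{Ran}\,P_k)/\sigma_{\sphn}$. Since the measure is normalized, this density equals the multiplicity $m_k\sim \frac{2\cdot 2^{\lfloor n/2\rfloor}}{(n-1)!}k^{n-1}\cdot(\text{lower order})$ in a Clifford-adapted sense.

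Next comes the layer-cake step. Write
\[
\sum_j\|\slashed D\psi_j\|^2=\sum_j\int_0^\infty \|P^{>}_{E}\psi_j\|^2\,dE ,
\]
where $P^{>}_E=\chi_{(E,\infty)}(\slashed D^2)$, or better, use the splitting $\psi_j=P^{<}_E\psi_j+P^{>}_E\psi_j$ with a smoothly chosen threshold. For each $x$ and $E$, set $\psi_j=\psi_j^{<,E}+\psi_j^{>,E}$. By Cauchy--Schwarz with a vector $u$ (restricting to para-vector-valued test functions or using the Clifford constant $K_d$ if needed), we get
\[
\Big(\sum_j|\psi_j^{<,E}(x)|^2\Big)^{1/2}\le \Big(\sum_{(k+n/2)^2<E,\,k\ge 1} m_k\Big)^{1/2}=:A(E)^{1/2},
\]
using orthonormality of $\{\psi_j\}$ and Bessel's inequality. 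The mean-zero condition removes $k=0$ only for the Laplacian part; for $\slashed D$ all $k\ge0$ appear, so we keep the full count. Then
\[
\rho(x)^{1/2}\le A(E)^{1/2}+\Big(\sum_j|\psi_j^{>,E}(x)|^2\Big)^{1/2},
\]
which gives $\sum_j|\psi_j^{>,E}(x)|^2\ge(\sqrt{\rho(x)}-\sqrt{A(E)})_+^2$. Integrating in $E$ against $dE$ (so that $\int_0^\infty\|P^{>}_E\psi\|^2dE=\|\slashed D\psi\|^2$) and then in $x$ yields
\[
\sum_j\|\slashed D\psi_j\|^2\ge\int_{\sphn}\int_0^\infty(\sqrt{\rho}-\sqrt{A(E)})_+^2\,dE\,dx .
\]

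The main work is bounding $A(E)$ above by an explicit power, $A(E)\le \frac{2^{\lfloor n/2\rfloor+1}}{n!}\,E^{n/2}\cdot(\text{correction})$. This uses the exact Bär multiplicities $2^{\lfloor n/2\rfloor}\binom{k+n-1}{k}$ per sign and an estimate of the partial sums $\sum_{k<K}\binom{k+n-1}{n-1}$ by $\frac{(K+n/2)^n}{n!}$ or a polynomial perturbation of it. Once $A(E)\le a E^{n/2}$, the $E$-integral of $(\sqrt\rho-\sqrt{a}E^{n/4})_+^2$ is computed explicitly via a Beta integral and equals $C(n)\,a^{-2/n}\rho^{(n+2)/n}$. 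The factor $a^{-2/n}$ produces $n!^{2/n}2^{-\frac2n(\lfloor n/2\rfloor+1)}$.

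The hardest step is getting the rational constant $c_n'$. The step-function nature of $A(E)$ and the lower-order terms in the multiplicity sum force a comparison like $\sum_{k<K}\binom{k+n-1}{n-1}\le \frac{(K+\alpha)^n}{n!}$. We will choose the shift $\alpha$ and then optimize a piecewise integral, replacing $E$ by the eigenvalue grid and bounding the Riemann sum. The ratio $\frac{n^4-6n^2+6n+8}{2(n+1)^2(n+2)^2\cdots}$ (note $2n^4+16n^3+42n^2+44n+16=2(n+1)^2(n+2)(n+4)$... to be checked) should come from this comparison together with the Beta-integral constant $\frac{2\cdot(n/2)^2}{(n+2)(n+4)/4}$-type factors. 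We first try the shift $\alpha=n/2$, then verify nonnegativity of the remainder polynomial, with $n=1$ handled separately by direct computation, which gives $7/360$.
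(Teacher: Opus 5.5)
Your set-up matches the paper's. Your layer-cake $\int_0^\infty\|\chi_{(E,\infty)}(\slashed D^2)\psi\|^2\,dE$ is the paper's $2\int_0^\infty E\,\|P_E^\perp\psi\|^2\,dE$ after $E\mapsto E^2$. The pointwise bound $\sum_j|\psi_j^{>,E}(x)|^2\ge(\rho(x)^{1/2}-A(E)^{1/2})_+^2$ via Bessel and the addition formula is the paper's argument, which it phrases with Hilbert--Schmidt norms on shrinking balls. Two small points. $A(E)$ must include $k=0$, as your text (not your display) says. No constant $K_d$ is needed, because Bessel applied componentwise in the scalar inner product already gives the trace-density bound.

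\textbf{The gap is the last step.} This is where $c_n'$ is produced, and you leave it as a plan. Your Beta-integral step assumes $A(E)\le aE^{n/2}$ with $a=2^{\lfloor n/2\rfloor+1}/n!$, and this is false. Just above the lowest eigenvalue $\sqrt E=\frac n2$ one has $A=2^{\lfloor n/2\rfloor+1}=a\,n!>a(\frac n2)^n$. Once a correction is inserted, the $E$-integral is no longer homogeneous in $\rho$. You then have to bound the infimum over $\rho$ of its ratio to $\rho^{(n+2)/n}$, which you do not do; the target constant is only guessed.

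\textbf{How the paper closes it.} Work in $\lambda=\sqrt E$. The hockey-stick identity plus AM--GM give $A\le a(\lambda+\frac12)^n$ for $\lambda>\frac n2$, and $A=0$ otherwise. The shift is $\frac{n-1}{2}$ in the number of terms, i.e.\ $\frac12$ in $\lambda$, not $\frac n2$. With $\rho_n=\rho/a$, the paper computes $I_0(\rho_n)=\int_0^\infty\lambda\,(\rho_n^{1/2}-(\lambda+\frac12)^{n/2}\mathbf{1}_{\{\lambda>n/2\}})_+^2\,d\lambda$ exactly; there is no Riemann sum over the eigenvalue grid.
\begin{itemize}
\item For $\rho_n>(\frac{n+1}{2})^n$ it writes $I_0/\rho_n^{(n+2)/n}=q(y)+R(y)$, where $y=\rho_n^{-1/n}\in(0,\frac{2}{n+1})$, $q(y)=\frac18y^2-\frac{n^2}{2(n+1)(n+2)}y+\frac{n^2}{2(n+2)(n+4)}$, and $R(y)=2A_ny^{(n+4)/2}-B_ny^{n+2}$ is shown to be positive there.
\item It then drops $R$. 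For $n\ge2$ the vertex $\frac{2n^2}{(n+1)(n+2)}$ is $\ge\frac{2}{n+1}$, so $\inf q=q(\frac{2}{n+1})=c_n'$. Your factorisation $2(n+1)^2(n+2)(n+4)$ of the denominator is correct.
\item For $n=1$ the vertex $y=\frac13$ is interior, and $q(\frac13)=\frac{7}{360}$.
\item For $\rho_n\le(\frac{n+1}{2})^n$ the ratio is $\ge\frac{n^2}{2(n+1)^2}>c_n'$.
\end{itemize}
So $c_n'$ comes from discarding $R$; it is not a Beta-integral constant.

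\textbf{Your Beta route for $n\ge2$.} It can be made rigorous there. Since $\lambda+\frac12\le(1+\frac1n)\lambda$ for $\lambda\ge\frac n2$, one has $A\le a(1+\frac1n)^n\lambda^n$ for all $\lambda$. The Beta integral then gives the constant $\frac{n^4}{(n+1)^2(n+2)(n+4)}a^{-2/n}$. This is at least the theorem's $2c_n'a^{-2/n}=c_n'n!^{2/n}2^{1-\frac2n(\lfloor n/2\rfloor+1)}$ exactly when $6n^2-6n-8\ge0$, i.e.\ for $n\ge2$. For $n=1$ it gives only $\frac{1}{240}<\frac{7}{720}$, so that case genuinely needs the non-homogeneous analysis above.
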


\begin{obs}
    We note that Theorems \ref{teos2}  and \ref{teosn} remain valid if we replace $\C_d$ by $\R_{0,d}$.
\end{obs}

Using Theorems \ref{teos2}, \ref{teosn} and \ref{teosn2}, we obtain the following approximation for the lower bounds for $K_{\sphn}$ and $K'_{\sphn}$ for the first few values of $n$.
\makegapedcells
\begin{table}[H]
\begin{tabular}{||c| c||} 
 \hline
 $n$&$K_{\sphn}\geq$ \\ [0.5ex] 
 \hline\hline
1&0.153595\\
\hline
2&0.$\overline{333333}$\\
\hline
3&0.593385\\
\hline
4&0.767663\\
\hline
5&1.087326\\
 \hline
\end{tabular}\quad
\begin{tabular}{||c| c||} 
 \hline
 $n$&$K_{\sphn}'\geq$ \\ [0.5ex] 
 \hline\hline
1&0.0097$\overline{222}$\\
\hline
2&0.02$\overline{77777}$\\
\hline
3&0.1240172\\
\hline
4&0.2771281\\
\hline
5&0.6682066\\
 \hline
\end{tabular}

 \caption{Lower bounds for the best constants $K_{\sphn}$ and $K_{\sphn}'$.}\label{table1}
 \end{table}
Note that the lower bound given by Theorem \ref{teos2} is sharper than the one obtained by Theorem \ref{teosn} for the case $n=2$, due to the less precise estimates for $\|\Gamma_\psi P_E\chi_B\|_{HS}$ in the proof of the latter theorem.

Also, as $n\to \infty$, the lower bounds obtained by Theorem \ref{teosn} and Theorem \ref{teosn2} have growth order of $n!^{\frac{1}{n}}$ and $n!^{\frac{2}{n}}$, as $n\to\infty$, respectively.

As a special case we have the following corollary, where $\|\cdot\|_{L^2}$ denotes the usual $L^2$ norm for scalar-valued functions on the sphere, with respect to the normalized standard surface measure. 

\begin{corollary}\label{coro_scalar}
For any orthonormal family  of complex-valued functions $\{\psi_j\}_{j=1}^N \subset  H^{1}(\sphn)$ and $\int_{\sphn}\psi_j=0$ for all $j$, we have that
    \begin{equation*}
     \sum_{j=1}^N\|\nabla\psi_j\|^2_{L^2}=\sum_{j=1}^N\|\slashed D\psi_j\|_{L^2}^2 \geq  c_n'n!^{\frac{2}{n}}2^{{1}-\frac{2}{n}(\lfloor\frac{n}{2}\rfloor+1)}\int_{\sphn}\rho^{\frac{n+2}{n}}(x)dx,
\end{equation*}
$\rho(x)\defeq\sum_{j=1}^N|\psi_j(x)|^2,$
$c_n'$ is given by \eqref{eq_cn'}, and the integration is taken with respect to the normalized standard surface measure on the sphere.
\end{corollary}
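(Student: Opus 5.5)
The corollary should follow from Theorem \ref{teosn2} by viewing a complex-valued function as a $\C_d$-valued function with only a scalar component. There are two things to check: that orthonormality and the mean-zero condition carry over, and that $\|\slashed D\psi\|^2=\|\nabla\psi\|^2$ for scalar $\psi$.

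First, let $\{\psi_j\}\subset H^1(\sphn)$ be complex-valued, orthonormal and of mean zero. Regard each $\psi_j$ as the function $\psi_j e_\emptyset\in H^1(\sphn,\C_d)$ for any fixed $d\ge 1$. Since $[\overline{\psi_j}\psi_k]_0=\overline{\psi_j}\psi_k$ for scalar functions, the inner product $\langle\cdot,\cdot\rangle$ of \eqref{scalar inner product} reduces to the usual $L^2$ inner product. Hence the family stays orthonormal, $\int\psi_j=0$ still holds, and $\rho=\sum_j|\psi_j|^2$ is unchanged. Theorem \ref{teosn2} therefore applies directly and gives the right-hand inequality with the constant $c_n'n!^{\frac{2}{n}}2^{1-\frac{2}{n}(\lfloor\frac{n}{2}\rfloor+1)}$.

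It remains to show $\|\slashed D\psi\|_{L^2}^2=\|\nabla\psi\|_{L^2}^2$ for scalar $\psi$. For scalar $\psi$ we have $\slashed D\psi=\Gamma\psi-\frac{n-1}{2}\psi$. Here $\Gamma\psi=-\sum_{j<k}e_{jk}L_{jk}\psi$ is purely bivector-valued and $\frac{n-1}{2}\psi$ is scalar, so the two parts are orthogonal pointwise under $[\overline{a}b]_0$. This gives
\begin{equation*}
|\slashed D\psi|^2=\sum_{j<k}|L_{jk}\psi|^2+\Big(\tfrac{n-1}{2}\Big)^2|\psi|^2 .
\end{equation*}
The $e_{jk}$ are orthonormal in the $|\cdot|$ norm, and $\sum_{j<k}|L_{jk}\psi|^2=|\nabla_{\sphn}\psi|^2$ pointwise. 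The latter is the standard identity for the restriction to the sphere of the extension of $\psi$ that is constant in the radial direction: $\sum_{j<k}|x_j\partial_k\psi-x_k\partial_j\psi|^2=|x|^2|\nabla\psi|^2-|x\cdot\nabla\psi|^2$.

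However, this computation leaves the extra term $\left(\frac{n-1}{2}\right)^2\|\psi\|^2$. So the stated identity $\|\nabla\psi\|^2=\|\slashed D\psi\|^2$ cannot hold literally for this normalization of $\slashed D$. The same conclusion follows from the Weitzenböck identity $\slashed D^2=\left(\frac{n-1}{2}\right)^2-\Delta_{LB}$. The intended meaning must be that for scalar functions the relevant quantity is the gradient part, i.e. $\Gamma$, which "coincides with the gradient" as the introduction puts it.

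I therefore see two ways to finish. The first is to read the Dirac operator in the corollary as $\Gamma$ acting on scalars. Then the first equality holds by the pointwise identity above, and the inequality has to come from the proof of Theorem \ref{teosn2} run with $\Gamma$ in place of $\slashed D$. The second is to check that the proof of Theorem \ref{teosn2} uses only eigenvalue lower bounds that remain valid for $-\Delta_{LB}$ on mean-zero functions, whose first eigenvalue is $n$. Either way, the main obstacle is reconciling the shift $\left(\frac{n-1}{2}\right)^2$. I would first try to verify, in the proof of Theorem \ref{teosn2}, that the spectral estimates used there (the counting of eigenvalues of $\slashed D^2$ below a level $E$) are dominated by those for $-\Delta_{LB}$ restricted to the orthogonal complement of the constants. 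If they are, the same constant $c_n'$ carries over to $\sum_j\|\nabla\psi_j\|^2$.
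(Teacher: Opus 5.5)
Your first paragraph, plus the identity $\|\slashed D\psi\|=\|\nabla\psi\|$, is the entire argument the paper gives. The corollary is stated as the scalar special case of Theorem~\ref{teosn2}, justified only by the introduction's remark that the Dirac operator coincides with the gradient on scalar functions. Your computation correctly refutes that identity for $n\ge2$: with $\slashed D=\Gamma-\frac{n-1}{2}$ one has $\|\slashed D\psi\|^2=\|\nabla\psi\|^2+\frac{(n-1)^2}{4}\|\psi\|^2$. So Theorem~\ref{teosn2} only gives $\sum_j\|\nabla\psi_j\|^2+\frac{(n-1)^2}{4}N\ge C_n\int_{\sphn}\rho^{\frac{n+2}{n}}dx$, where $C_n:=c_n'\,n!^{\frac2n}2^{1-\frac2n(\lfloor\frac n2\rfloor+1)}$.

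The gap in your proposal is that it stops at two unexecuted repair plans, and no repair can work for all $n$, because the stated inequality is false for large $n$. Take $\psi_j=\sqrt{n+1}\,x_j$, $j=1,\dots,n+1$, where $x_j$ are the coordinate functions of $\R^{n+1}$ restricted to $\sphn$; these are mean-zero and orthonormal for the normalized measure. Then $\rho\equiv n+1$ and $\|\nabla\psi_j\|^2=n$, so the corollary would force $n\ge C_n(n+1)^{2/n}$. Hence any admissible constant is at most $n(n+1)^{-2/n}<n$, whereas $C_n\sim n^2/(2e^2)$. Concretely, $C_{16}\approx13.06$ and $C_{16}\,17^{1/8}\approx18.6>16$. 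Alternatively, with $N=1$ and $\psi=\sqrt{21}\,x_1$ on $\mathbb{S}^{20}$, $\|\nabla\psi\|^2=20$, while by Jensen the right-hand side is at least $C_{20}\approx21.8$. The same example is harmless for Theorem~\ref{teosn2}, because there each $\psi_j$ carries the extra shift and $\|\slashed D\psi_j\|^2=\frac{(n+1)^2}{4}$.

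Your second route shows exactly where the argument survives. Match the degree-$\ell$ harmonics $\mathcal H_\ell$ with the Dirac level $k=\ell-1$. You then need $(\frac n2+\ell-1)^2\le\ell(\ell+n-1)$, i.e.\ $(\frac n2-1)^2\le\ell$, together with $\dim\mathcal H_\ell\le2^{\lfloor n/2\rfloor+1}\binom{\ell+n-2}{\ell-1}$.

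For $n\le4$ both conditions hold for every $\ell\ge1$. Set $\mu_\ell=\frac n2+\ell-1$, let $\pi_\ell$ be the projection onto $\mathcal H_\ell$, and put $Q_E=\sum_{\mu_\ell<E}\pi_\ell$. Then $\|\nabla\psi\|^2\ge\sum_{\ell\ge1}\mu_\ell^2\|\pi_\ell\psi\|^2=2\int_0^\infty E\|Q_E^\perp\psi\|^2\,dE$, and the addition theorem gives $\|Q_E\chi_B\|_{HS}^2\le K_n'C(E)|B|$. From there the proof of Theorem~\ref{teosn2} runs verbatim, so the corollary does hold for $n\le4$.

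For $n\ge5$ the comparison fails already at $\ell=1$. The first eigenvalue of $(-\Delta)^{1/2}$ on mean-zero functions is $\sqrt n<\frac n2$, with multiplicity $n+1$. So the scalar counting function is at least $n+1$ on $(\sqrt n,\frac n2]$, where $K_n'C(E)=0$. That is exactly the range $n\ge5$ in which the paper invokes the corollary to improve Theorem~\ref{teoLiebsn}.
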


We note that from the proof of Theorem \ref{teosn2}, if we consider the non-normalized surface measure of the sphere, the inequality above remains valid if we multiply the constant on the right by $(\sigma_{\sphn})^{\frac{2}{n}}$.

 For comparison, we cite the following result, which can be found in \cite{Kow}, but was adapted to consider the normalized measure on the sphere.

\begin{theorem}\label{teoLiebsn} 
Let $n\geq2$.  
    For any orthonormal family $\{\psi_j\}_{j=1}^N\subset H^{1}(\sphn)$ and $\int_{\sphn}\psi_j=0$ for all $j$, we have that  
    \begin{equation}\label{eqLieb}
        \sum_{j=1}^N\|\nabla \psi_j\|^2_{L^2} \geq  \frac{n!^{\frac{2}{n}}}{(n+4)}\left(\frac{n}{n+2}\right)^{\frac{n+2}{n}}\int_{\mathbb{S}^n}\rho(x)^{\frac{n+2}{n}}dx,
    \end{equation}
    where
$\rho(x)\defeq\sum_{j=1}^N|\psi_j(x)|^2,$
 and the integration is taken with respect to the normalized standard surface measure on the sphere.
\end{theorem}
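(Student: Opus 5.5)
The statement is quoted from \cite{Kow}, adapted only by a change of measure, so the plan is to reproduce the Rumin-type argument of \cite{Kow,LiebonS2} on $\sphn$ with the normalized measure. Once the Dirac machinery is in place, the corresponding bound for real-valued functions could also be read off from Corollary \ref{coro_scalar}. For a direct proof, write $-\Delta_{LB}=\sum_{k\geq1}\lambda_k P_k$ with $\lambda_k=k(k+n-1)$, where $P_k$ is the orthogonal projection onto spherical harmonics of degree $k$. The mean-zero condition removes $k=0$.

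\textbf{Layer-cake step.} For $E>0$ let $P^E=\chi_{[0,E)}(-\Delta_{LB})$ restricted to mean-zero functions and $P_E^\perp=I-P^E$. Then
\begin{equation*}
\sum_{j}\|\nabla\psi_j\|^2=\int_0^\infty \sum_j\|P_E^\perp\psi_j\|^2\,dE.
\end{equation*}
For each $x$ and a unit vector $\xi$ in $\C^N$, set $\psi_\xi=\sum_j\xi_j\psi_j$, and write $\psi_\xi=P^E\psi_\xi+P^\perp_E\psi_\xi$. The triangle inequality gives
\begin{equation*}
\Big(\sum_j|P_E^\perp\psi_j(x)|^2\Big)^{1/2}\geq \big(\rho(x)^{1/2}-\|\gamma^E(x)\|\big)_+,
\end{equation*}
where $\|\gamma^E(x)\|^2=\sum_j|P^E\psi_j(x)|^2$.

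\textbf{Bounding the low-energy part.} Since $\{\psi_j\}$ is orthonormal, $\sum_j|P^E\psi_j(x)|^2\leq P^E(x,x)$, the diagonal of the kernel. By rotation invariance and the addition theorem, $P^E(x,x)=\sum_{1\leq k:\lambda_k<E}\dim\mathcal H_k$ with respect to the normalized measure. Here $\dim\mathcal H_k=\binom{n+k}{n}-\binom{n+k-2}{n}$, so the sum telescopes to $\binom{n+K}{n}+\binom{n+K-1}{n}-1$, where $K$ is the largest degree with $\lambda_K<E$. Using $\lambda_k\geq k^2$, we have $k<\sqrt{E}$, which gives a bound of the form $P^E(x,x)\leq \frac{2}{n!}(E^{1/2}+c)^n$. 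We then want a clean estimate $P^E(x,x)\leq A\,E^{n/2}$ together with the observation that $P^E(x,x)=0$ for $E<\lambda_1=n$.

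\textbf{Integration and optimization.} This gives
\begin{equation*}
\sum_j\|\nabla\psi_j\|^2\geq\int_{\sphn}\int_0^\infty\big(\rho^{1/2}-(AE^{n/2})^{1/2}\big)_+^2\,dE\,dx.
\end{equation*}
The inner integral is computed by substituting $E=t\,(\rho/A)^{2/n}$. It equals $\rho^{(n+2)/n}A^{-2/n}\int_0^1(1-t^{n/4})^2dt$, and the last integral is $\frac{n^2}{(n+2)(n+4)}$ up to the exact exponent bookkeeping. Matching this with the constant $\frac{n!^{2/n}}{n+4}(\frac{n}{n+2})^{(n+2)/n}$ tells us which $A$ must be used; it should be $A$ proportional to $2/n!$.

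\textbf{Main obstacle.} The hard part is the eigenvalue counting: we need $P^E(x,x)\leq A E^{n/2}$ uniformly in $E$, including near the thresholds $E=\lambda_k$. The step-function sum has lower-order terms in $\sqrt E$. I would first try to absorb these by combining $\lambda_k=k(k+n-1)\geq k^2+(n-1)k$ with the vanishing for $E<n$, and check the small-$K$ cases by hand. If the uniform bound fails, I would instead adjust the comparison inside the positive part: replace $(\rho^{1/2}-\|\gamma^E\|)_+^2$ by the Rumin-type variant $\rho-2\rho^{1/2}\|\gamma^E\|$, and optimize a free parameter to recover the stated $\frac{n}{n+2}$ and $\frac{1}{n+4}$ factors.
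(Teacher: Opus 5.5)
The paper gives no proof of this statement; it is quoted from \cite{Kow}. Your scheme is the same Rumin-type argument the paper uses for Theorems \ref{teosn} and \ref{teosn2}: the layer-cake formula, a pointwise triangle inequality in $\C^N$ (a clean substitute for the $\chi_B$/Hilbert--Schmidt localization), and Bessel's inequality against the kernel diagonal. Your value $\int_0^1(1-t^{n/4})^2\,dt=\frac{n^2}{(n+2)(n+4)}$ is exact, with no bookkeeping left.

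\textbf{The gap.} The one step that carries the content is not done: you never prove $P^E(x,x)\le AE^{n/2}$, and you never identify $A$. In your scheme the constant is $\frac{n^2}{(n+2)(n+4)}A^{-2/n}$, so the statement forces
\[
A\le\frac{(n+2)\,n^{\frac n2-1}}{n!}.
\]
This is not ``$2/n!$ up to a harmless factor''. The Weyl bound $\frac{2}{n!}E^{n/2}$ already fails just above $\lambda_1=n$, where $P^E(x,x)=n+1$.
\begin{itemize}
\item Your main route cannot reach this $A$. Any estimate that uses only $k<\sqrt E$ (and $E>n$) is too weak. For $n=3$ and $K=2$ the count is $4+9=13$, while $\frac{5\sqrt3}{6}E^{3/2}\approx 11.5$ for $E$ just above $4$.
\item Your fallback does not help either. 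Since $(a-b)_+^2\ge a^2-2ab$, the variant $\rho-2\rho^{1/2}\|\gamma^E\|$ needs the same uniform bound. It then yields only $\frac{n}{n+4}(4A)^{-2/n}$, which is strictly smaller than $\frac{n^2}{(n+2)(n+4)}A^{-2/n}$.
\end{itemize}

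\textbf{How to close it.} Your ``first try'' of keeping the full eigenvalue $\lambda_K=K(K+n-1)$ is the right one; here $K$ is the largest degree with $\lambda_K<E$. It works with no small-$K$ checking if you bound the exact count
\[
\sum_{k=1}^{K}\dim\mathcal H_k=\frac{(2K+n)(K+1)\cdots(K+n-1)}{n!}-1
\]
factor by factor against $\sqrt{n\lambda_K}$.
\begin{itemize}
\item For $1\le j\le n-1$ one has $n\lambda_K-(K+j)^2\ge (n-1)(K-1)(K+n-1)\ge 0$.
\item One also has $(n+2)^2\lambda_K-n(2K+n)^2=(K-1)\big((n^2+4)K+n^3\big)\ge0$.
\end{itemize}
Hence $P^E(x,x)<\frac{(n+2)n^{n/2-1}}{n!}\lambda_K^{n/2}<\frac{(n+2)n^{n/2-1}}{n!}E^{n/2}$, and $P^E=0$ for $E\le n$. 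Your integration then gives exactly $\frac{n!^{2/n}}{n+4}\big(\frac{n}{n+2}\big)^{\frac{n+2}{n}}$.

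\textbf{A separate error.} Drop the remark that the bound can be read off from Corollary \ref{coro_scalar}. For $n\le 4$ that corollary's constant is smaller; for example it is $\frac1{36}$ versus $\frac1{12}$ when $n=2$. The paper itself notes that it only improves on this theorem for $n\ge5$.
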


Note that the constant obtained in Corollary \ref{coro_scalar} grows faster as $n\to \infty$, and in fact is greater than the constant 
from Theorem \ref{teoLiebsn} for $n\geq 5$. Therefore for $n\geq 5$ it provides a better estimate for the best constant for the usual Lieb-Thirring inequality on the sphere.

In summary:
\begin{theorem}\label{teoLiebsn_improved} 
    For any orthonormal family $\{\psi_j\}_{j=1}^N\subset H^{1}(\sphn)$ and $\int_{\sphn}\psi_j=0$ for all $j$, we have that  
    \begin{equation}\label{eqLieb_improved}
       \int_{\mathbb{S}^n}\rho(x)^{\frac{n+2}{n}}dx\leq (c_n')^{-1}n!^{-\frac{2}{n}}2^{\frac{2}{n}(\lfloor\frac{n}{2}\rfloor+1)-1} \sum_{j=1}^N\|\nabla \psi_j\|^2_{L^2} 
    \end{equation}
    where
$\rho(x)\defeq\sum_{j=1}^N|\psi_j(x)|^2,$ $c_n'$ is given by \eqref{eq_cn'}, 
 and the integration is taken with respect to the normalized standard surface measure on the sphere. In other words, 
 \begin{equation*}
k_{\sphn}\leq  (c_n')^{-1}n!^{-\frac{2}{n}}2^{\frac{2}{n}(\lfloor\frac{n}{2}\rfloor+1)-1}(\sigma_{\sphn})^{-\frac{2}{n}}.     
 \end{equation*}

\end{theorem}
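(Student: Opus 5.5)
Theorem \ref{teoLiebsn_improved} follows from Corollary \ref{coro_scalar} by rearrangement and a change of normalization; the work lies in justifying the identity $\|\slashed D\psi\|_{L^2}=\|\nabla\psi\|_{L^2}$ for scalar functions used in that corollary. I will take Theorem \ref{teosn2} as given and argue in three steps.

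\textbf{Step 1: the scalar identity.} Let $\psi\in H^1(\sphn)$ be complex-valued, viewed as a function with values in $\C^0_d\subset\C_d$. Since $\psi$ is scalar, $\Gamma\psi=-\sum_{j<k}e_{jk}L_{jk}\psi$ is a pure bivector. Then $\slashed D\psi=\Gamma\psi-\frac{n-1}{2}\psi$ has orthogonal bivector and scalar parts, so pointwise
\begin{equation*}
|\slashed D\psi|^2=|\Gamma\psi|^2+\left(\tfrac{n-1}{2}\right)^2|\psi|^2,\qquad |\Gamma\psi|^2=\sum_{j<k}|L_{jk}\psi|^2=|\nabla\psi|^2 .
\end{equation*}
The last equality is the standard identity $\sum_{j<k}|L_{jk}\psi|^2=|x|^2|\nabla_{\R^{n+1}}\psi|^2-|x\cdot\nabla\psi|^2$ on $|x|=1$, applied to the $0$-homogeneous extension of $\psi$. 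This gives $\|\slashed D\psi\|^2=\|\nabla\psi\|^2+(\frac{n-1}{2})^2\|\psi\|^2$. The same value follows from the Weitzenböck formula $\langle\slashed D^2\psi,\psi\rangle=(\frac{n-1}{2})^2\|\psi\|^2-\langle\Delta_{LB}\psi,\psi\rangle$.

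The zeroth-order term must be handled honestly: as written, Corollary \ref{coro_scalar} asserts exact equality between $\|\nabla\psi\|^2$ and $\|\slashed D\psi\|^2$. I expect this to be the only genuine obstacle. To resolve it, I would check whether the proof of Theorem \ref{teosn2} uses $\slashed D$ only through $\Gamma$ (equivalently, through $\sum|L_{jk}\psi|^2$). The proof presumably runs a Rumin-type spectral splitting with the eigenvalues of $\Gamma$, and the constant $c_n'$ suggests explicit spectral bookkeeping. If so, the bound holds with $\|\Gamma\psi_j\|^2=\|\nabla\psi_j\|^2$ on the left, and the corollary is exact. Otherwise I would absorb the term $(\frac{n-1}{2})^2 N$ using the mean-zero condition. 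The Poincaré inequality on $\sphn$ gives $\|\nabla\psi_j\|^2\ge n\|\psi_j\|^2$, hence
\begin{equation*}
\|\slashed D\psi_j\|^2\le\left(1+\tfrac{(n-1)^2}{4n}\right)\|\nabla\psi_j\|^2 .
\end{equation*}
This alternative route costs a factor in the constant.

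\textbf{Step 2: apply the corollary.} Corollary \ref{coro_scalar} applies to the family $\{\psi_j\}$, which is orthonormal in the scalar $L^2$, with $\int\psi_j=0$ and $\rho=\sum_j|\psi_j|^2$ matching the Clifford modulus. Dividing by the positive constant $c_n'n!^{2/n}2^{1-\frac{2}{n}(\lfloor n/2\rfloor+1)}$ gives \eqref{eqLieb_improved} for the normalized measure. Positivity of $c_n'$ holds for $n\ge2$, since $n^4-6n^2+6n+8>0$ there.

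\textbf{Step 3: pass to the non-normalized measure.} Write $dx=\sigma_{\sphn}\,d\mu$ with $\mu$ normalized. Orthonormality in $L^2(dx)$ means $\tilde\psi_j=\sigma_{\sphn}^{1/2}\psi_j$ is orthonormal in $L^2(d\mu)$, and $\tilde\rho=\sigma_{\sphn}\rho$. Tracking powers gives
\begin{equation*}
\int\rho^{\frac{n+2}{n}}\,dx=\sigma_{\sphn}^{-\frac{2}{n}}\int\tilde\rho^{\frac{n+2}{n}}\,d\mu ,
\end{equation*}
while $\sum_j\|\nabla\psi_j\|^2_{L^2(dx)}=\sum_j\|\nabla\tilde\psi_j\|^2_{L^2(d\mu)}$. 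Therefore the constant acquires exactly the factor $\sigma_{\sphn}^{-2/n}$, and by minimality of $k_{\sphn}$ we obtain the stated bound.
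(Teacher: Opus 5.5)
\textbf{There is a genuine gap: Step 1 is never resolved, and Step 2 then relies on the identity Step 1 refutes.}

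Your Steps 2 and 3 follow the paper's route exactly. The paper obtains Theorem \ref{teoLiebsn_improved} by rearranging Corollary \ref{coro_scalar} and rescaling the measure. It justifies that corollary only by asserting that $\slashed D$ coincides with the gradient on scalar functions.

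Your Step 1 computation is correct. With $\slashed D=\Gamma-\frac{n-1}{2}$ one gets $\|\slashed D\psi\|^2=\|\nabla\psi\|^2+\frac{(n-1)^2}{4}\|\psi\|^2$ for scalar $\psi$. So the equality in Corollary \ref{coro_scalar} fails for $n\ge 2$, and this objection applies to the paper's derivation as well.

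Your proposal does not get past this point. Theorem \ref{teosn2} bounds from below the \emph{larger} quantity $\sum_j\|\slashed D\psi_j\|^2$. That is the wrong direction for a lower bound on $\sum_j\|\nabla\psi_j\|^2$. Yet Step 2 applies Corollary \ref{coro_scalar} as written, so it uses precisely the identity you have just refuted. Steps 2--3 are therefore valid only conditionally on a resolution of Step 1, and you do not supply one.

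Neither of your proposed resolutions yields the stated constant.

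Your first resolution, that the proof of Theorem \ref{teosn2} ``only sees $\Gamma$'', fails. That proof computes the left-hand side as $2\int_0^\infty E\,\|P_E^\perp\psi\|^2\,dE$ from the spectral data $\pm(\frac n2+k)$, $m_k$ of $\slashed D$ itself. These data force $\|\slashed D\psi\|^2\ge\frac{n^2}{4}\|\psi\|^2$ for every mean-zero $\psi$. By contrast, the coordinate function $x_1$ restricted to $\sphn$ has $\|\nabla x_1\|^2=n\|x_1\|^2$. For $n\ge 5$, exactly the range where the theorem claims an improvement, the quantity controlled by that proof therefore cannot be $\sum_j\|\nabla\psi_j\|^2$. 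Dropping the shift changes the eigenvalue weights in the $E$-integral, so the computation producing $c_n'$ would have to be redone.

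Your second resolution, via Poincar\'e, is correct but multiplies the constant by $\bigl(1+\frac{(n-1)^2}{4n}\bigr)^{-1}=\frac{4n}{(n+1)^2}$. It therefore proves a strictly weaker inequality than \eqref{eqLieb_improved}. At $n=5$, for example, the resulting constant (about $0.371$) is below the one in Theorem \ref{teoLiebsn} (about $0.471$), so even the claimed improvement does not survive.

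To prove the statement as written, you would need an argument that produces the constant $c_n'n!^{2/n}2^{1-\frac{2}{n}(\lfloor n/2\rfloor+1)}$ directly for $\sum_j\|\nabla\psi_j\|^2$. That argument is missing from your proposal, and, as your own computation shows, from the derivation of Corollary \ref{coro_scalar}.
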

In the table below we compare the estimates for the best Lieb-Thirring constant obtained in Theorems \ref{teoLiebsn} and \ref{teoLiebsn_improved} for a few values of $n$, but considering the non-normalized surface measure of the sphere.

\begin{table}[h!]
    \centering
   \begin{tabular}{||c|c| c||} 
 \hline
 $k_{\sphn}\leq $& Theorem \ref{teoLiebsn}& Theorem \ref{teoLiebsn_improved} \\ [0.5ex] 
 \hline\hline
$n=4$&0.5847726...&0.7033721...\\
\hline
$n=5$&0.5377363...&0.3788866...\\
\hline
$n=6$&0.5100914...&0.3054612...\\
\hline
$n=7$&0.4920770...&0.2100443...\\
\hline
$n=8$&0.4795110...&0.1854909...\\
 \hline
\end{tabular}
    \caption{Approximate values for the upper bounds for $k_{\sphn}$ (with respect to the  non-normalized surface measure of the sphere) according to Theorems \ref{teoLiebsn} and \ref{teoLiebsn_improved}.}
    \label{tab:placeholder}
\end{table}
 We can observe that Theorem \ref{teoLiebsn_improved} provides a better upper bound for $n\geq 5$, improving the currently known upper bounds for $k_{\sphn}$ for this range of values for $n$.

\section{Proofs of the main results}\label{section_proofs}

\begin{proof}[Proof of Theorem \ref{teos2}]
First recall (see for instance \cite{Camporesi}) that the set of non-constant normalised eigenfunctions $\{y_k^{j\pm}\}_{j,k}$ for the Dirac operator on $\mathbb{S}^2$ have corresponding eigenvalues 
\begin{equation*}
    \lambda_k^\pm=\pm(k+1),\,k\in\N_0.
\end{equation*}
These eigenvalues have multiplicity
\begin{equation*}
    2(k+1),\,k\in\N_0,
\end{equation*}
each and we denote by $y^{j\pm}_{k}$ the corresponding eigenfunctions.  These eigenfunctions form an orthonormal basis for the set of functions with zero mean.
Moreover, the following identity also holds, for every $x\in\mathbb{S}^2$:
\begin{equation}\label{addition_formula_s2_1}
   \sum_{j=1}^{2(k+1)}|y^{j-}_{k}(x)|^2=\sum_{j=1}^{2(k+1)}|y^{j+}_{k}(x)|^2=2(k+1).
\end{equation}
We note that this identity differs slightly from the one present in  \cite{Camporesi} as we are considering the normalised surface measure on the sphere.

 Next, we introduce the following notation for labeling the non-constant eigenfunctions and corresponding eigenvalues of the projection onto the positive spectrum of the Dirac operator with a single subscript counting multiplicities:
    \begin{equation*}
        \{y_j\}_{j=1}^\infty=\left\{y_{k}^{j+},\dots\right\},
    \end{equation*}
    and
    \begin{equation*}
        (\lambda_j)_{j=1}^\infty =((k+1),\dots),
    \end{equation*}
    where the positive eigenvalue $(k+1)$ is repeated $2(k+1)$ times.
    For $E\geq 0$, define the spectral projections:
    \begin{equation*}
        P_E=\sum_{\lambda_j<E}\langle\cdot,y_j\rangle y_j
    \end{equation*}
    and
    \begin{equation*}
        P_E^\perp=\sum_{\lambda_j\geq E}\langle\cdot,y_j\rangle y_j.
    \end{equation*}
    Then 
\begin{equation}\label{lapnoncte2_1}
        {\Lambda^+(\slashed D)}_0 =\sum_{j=1}^\infty \lambda_j\langle\cdot,y_j\rangle y_j,
    \end{equation}
    where $ {\Lambda^+(\slashed D)}_0$ denotes the restriction of $ \Lambda^+(\slashed D)$ to the invariant subspace of functions orthogonal to constants, that is 
    \begin{equation*}
         {\Lambda^+(\slashed D)}_0=P_{\lambda_1}^\perp\circ\Lambda^+(\slashed D)\circ P_{\lambda_1}^\perp.
    \end{equation*}
    Next, notice that
    \begin{align*}
        \sum_{j=1}^\infty\lambda_j a_j&=(\lambda_1-0)\sum_{j=1}^\infty a_j+(\lambda_2-\lambda_1)\sum_{j=2}^\infty a_j +\dots=\int_0^\infty\sum_{\lambda_j\geq E}a_j dE,
    \end{align*}
therefore we have the spectral decomposition 
    \begin{align}\label{eq1_2_1}
        \Lambda^+(\slashed D)_0 &=\int_0^\infty\sum_{\lambda_j\geq E}\langle\cdot,y_j\rangle y_jdE=
        \int_0^\infty P_E^\perp dE.
    \end{align}
In view of \eqref{lapnoncte2_1} and \eqref{eq1_2_1}, for $\psi=(\psi_1,\dots,\psi_N)$ we have 
    \begin{align*}
        \langle\Lambda^+(\slashed D)\psi,\psi\rangle=\langle\Lambda^+(\slashed D)_0\psi,\psi\rangle&=\int_{0}^\infty\langle P_E^\perp\psi,\psi\rangle dE\\
        &=\int_0^\infty \|P_E^\perp \psi\|^2dE\\
        &=\sum_{j=1}^N \int_0^\infty\int_{\sph}|P_E^\perp \psi_j(x)|^2dxdE\\
        &=\sum_{j=1}^N\int_{\sph}\int_0^\infty
        |P_E^\perp \psi_j(x)|^2dEdx.
    \end{align*}
     Denoting by $\Gamma_\psi$ the orthogonal projection:
    \begin{equation*}
        \Gamma_\psi=\sum_{j=1}^N\langle\cdot,\psi_j\rangle\psi_j,
    \end{equation*}
    we obtain
    \begin{align}\label{eqgradient2_1}
\langle\Lambda^+(\slashed D)\psi,\psi\rangle&=\int_{\sph}\int_0^\infty\rho_{P_E^\perp\Gamma_\psi P_E^\perp}(x)dEdx,
    \end{align}
    where
    \begin{equation*}
        \rho_{P_E^\perp\Gamma_\psi P_E^\perp}(x)\defeq\sum_{j=1}^N|P_E^\perp \psi_j(x)|^2.
    \end{equation*}
    Now let $B$ be a neighbourhood around $x_0\in \sph$, with measure $|B|\leq 1$, and let $\chi_B$ be the corresponding characteristic function. Then
    \begin{align}\label{eq22_1}
        \left(\int_B\rho(x)dx\right)^{\frac{1}{2}}&=\|\Gamma_\psi\chi_{B}\|_{HS}\notag\\
        &\leq \|\Gamma_\psi P_E\chi_B\|_{HS}+\|\Gamma_\psi P_E^\perp\chi_B\|_{HS}\notag\\
        &=\|\Gamma_\psi P_E \chi_B\|_{HS}+\left(\int_B\rho_{P_E^\perp\Gamma_\psi P_E^\perp}(x)dx\right)^{\frac{1}{2}},
    \end{align}
    where $\|\cdot\|_{HS}$ denotes the usual Hilbert-Schmidt norm for operators.
    Since $\|\Gamma_\psi\|=1$ and  both $P_E$ and $\chi_B$ are projections, we conclude that:
    \begin{align}
        \|\Gamma_\psi P_E \chi_B\|_{HS}^2&\leq \|P_E\chi_B\|_{HS}^2\notag\\
&=\sum_{\lambda_j<E}\int_{\mathbb{S}^{2}}|y_j(x)|^2\chi_B(x)dx\notag\\
        &=\sum_{\substack{(k+1)<E\\k\in\N_0}}\int_{\mathbb{S}^{2}}\sum_{j=1}^{2(k+1)}{|y_{k}^{j+}(x)|^2}\chi_B(x)dx\notag\\
        &\stackrel{\eqref{addition_formula_s2_1}}{=}|B|\sum_{\substack{(k+1)<E\\k\in\N_0}}2(k+1)\notag\\
        &= 
            |B|(m^2+m),\,\quad\text{for }\quad m< E\leq m+1,\ m\in\N_0,\label{ineq_s21}
    \end{align}
    where the last equality follows from the fact that for $m<E\leq m+1$ we have that
    \begin{align*}
        \sum_{\substack{(k+1)<E\\k\in\N_0}}2(k+1)&= 2\sum_{\substack{k'=1}}^{m}k'\\
        &=2\left(\frac{(m+1)m}{2}\right)\\
        &=m^2+m.
    \end{align*}
    Next, let
    \begin{equation}
         C(E) \defeq m^2+m,\quad\text{for }\quad  m< E\leq m+1,\ m\in\N_0.
    \end{equation}
    Substituting inequality \eqref{ineq_s21} in \eqref{eq22_1}, dividing both sides by $|B|^{\frac{1}{2}}$, and letting $|B|\to 0$, we obtain
    \begin{equation*}
        \rho(x_0)^{1/2}\leq C(E)^{1/2}+\rho_{P_E^\perp\Gamma P_E^\perp}(x_0)^{1/2}, 
    \end{equation*}
    for almost every $x_0\in \sph$. 
    Since $\rho_{P_E^\perp\Gamma P_E^\perp}\geq0$, this implies
    \begin{equation*}
        \rho_{P_E^\perp\Gamma P_E^\perp}(x_0)\geq (\rho(x_0)^{1/2}-C(E)^{1/2})_+^2,
    \end{equation*}
    for almost every $x_0\in \sph$, where the expression $(\rho(x_0)^{1/2}-C(E)^{1/2})_+$ denotes the positive part of this difference. Integrating this expression over $x_0\in \sph$ and applying \eqref{eqgradient2_1} yields
\begin{equation}\label{eqgradientfinal2_1}
         \langle\Lambda^+(\slashed D)\psi,\psi\rangle\geq \int_{\sph}\int_0^\infty(\rho(x)^{1/2}-C(E)^{1/2})_+^2 dEdx.
    \end{equation}
     We proceed to estimate the integral above as follows: for $\rho> 0$, let
    \begin{equation}\label{Irho2_1}
        I(\rho)\defeq\int_0^\infty(\rho^{1/2}-C(E)^{1/2})_+^2 dE.
    \end{equation}
For any fixed $\rho>0$, let $M\in \N_0$ be the unique non-negative integer such that
\begin{equation*}
    M^2+M\leq \rho<(M+1)^2+(M+1).
\end{equation*}
Then 
\begin{align}
    I(\rho)&=\sum_{m=0}^M\int_{m}^{m+1}(\rho^{1/2}-(m^2+m)^{1/2})^2dE\notag\\
    &=\sum_{m=0}^M[\rho-2\sqrt{\rho}\sqrt{m^2+m}+m^2+m]\notag\\
    &=(M+1)\rho-2\sqrt{\rho}\sum_{m=0}^M\sqrt{m^2+m}+\frac{1}{3}M(M+1)(M+2).\label{ineq_I(rho)_1}
\end{align}
Note that
\begin{equation*}
    \sqrt{(m+1)(m+2)}-\sqrt{m(m-1)}>2,
\end{equation*}
for $m\geq 1$, therefore
\begin{equation*}
    2\sqrt{m^2+m}<(m+1)\sqrt{m(m+2)}-m\sqrt{(m-1)(m+1)}.
\end{equation*}
Setting $g(m)\vcentcolon=(m+1)\sqrt{m(m+2)}$, the inequality above corresponds to $ 2\sqrt{m^2+m}<g(m)-g(m-1)$. Therefore
\begin{equation*}
    2\sum_{m=0}^M\sqrt{m^2+m}\leq g(M)=(M+1)\sqrt{M(M+2)},
\end{equation*}
so that
\begin{align*}
   2I(\rho)&\geq (M+1)\Big(2\rho-2\sqrt{\rho}\sqrt{M(M+2)}+\frac{2}{3}M(M+2)\Big)\\
   &=(M+1)\Big(\rho+\Big(\sqrt{\rho}-\sqrt{M(M+2)}\Big)^2-\frac{1}{3}M(M+2)\Big)\\
   &\geq (M+1)\Big(\rho-\frac{1}{3}M(M+2)\Big).
\end{align*}
Therefore
\begin{align*}
     \frac{I(\rho)}{\rho^{\frac{3}{2}}}&\geq  \frac{1}{2}(M+1)\rho^{-\frac{1}{2}}\Big(1-\frac{1}{3}M(M+2)\rho^{-1}\Big)=\vcentcolon f(\rho,M).   
\end{align*}
Differentiating $f$ with respect to $\rho$, we find that  for each fixed $M$, its unique critical point lies at $\rho=M^2+2M\in [M^2+M,(M+1)^2+M+1)$, and that this critical point corresponds to a local maximum. Therefore $f(\rho,M)$ attains its minimum over $[M^2+M,(M+1)^2+(M+1)]$ at one of the endpoints of this interval. 

Note that 
\begin{equation*}
    f(M^2+M,M)=\frac{2M+1}{6\sqrt{M(M+1)}},
\end{equation*}
while 
\begin{equation*}
    f((M+1)^2+M+1,M)=\frac{2M+3}{6\sqrt{(M+1)(M+2)}}.
\end{equation*}
Since the function $x\mapsto \frac{1}{6}\frac{2x+1}{\sqrt{x(x+1)}}$ is decreasing for $x>0$, we conclude that $f(\rho,M)$ attains its minimum over $[M^2+M,(M+1)^2+M+1]$ at $(M+1)^2+M+1$. By continuity we get that
\begin{equation*}
    \inf_{ \rho\in[M^2+M,(M+1)^2+M+1)}\frac{I(\rho)}{\rho^{\frac{3}{2}}}\geq \frac{2M+3}{6\sqrt{(M+1)(M+2)}}.
\end{equation*}
Finally, noting that $\frac{2M+3}{6\sqrt{(M+1)(M+2)}}$ is decreasing with respect to $M$, we conclude that 
\begin{equation}\label{ineq_I(rho)_final_1}
    \inf_{ \rho>0}\frac{I(\rho)}{\rho^{\frac{3}{2}}}\geq \lim_{M\to \infty}\frac{2M+3}{6\sqrt{(M+1)(M+2)}}=\frac{1}{3}.
\end{equation}
Since 
\begin{equation*}
     \sum_{j=1}^N\langle\Lambda^+(\slashed D)\psi_j,\psi_j\rangle\geq \int_{\sph}I(\rho(x))dx
\end{equation*}
by \eqref{eqgradientfinal2_1}, from \eqref{ineq_I(rho)_final_1} we conclude that 
\begin{equation*}
     \sum_{j=1}^N\langle\Lambda^+(\slashed D)\psi_j,\psi_j\rangle\geq\frac{1}{3}\int_{\sph}\rho^{\frac{3}{2}}(x)dx,
\end{equation*}
completing the proof.
\end{proof}

\begin{proof}[Proof of Theorem \ref{teosn}]
First recall (see for instance \cite{Camporesi}) that the non-constant normalised eigenfunctions $\{y_k^{j\pm}\}_{j,k}$ for the Dirac operator on $\mathbb{S}^n$ (which form an orthonormal basis for the set of functions with zero mean) have corresponding eigenvalues 
\begin{equation}
    \lambda_k^\pm=\pm\left(\frac{n}{2}+k\right),\,k\in\N_0.
\end{equation}
Each of these eigenvalues has multiplicity
\begin{equation}\label{eq_mk_def}
    m_k\defeq2^{\lfloor \frac{n}{2}\rfloor}\cdot\binom{k+n-1}{k},\,k\in\N_0.
\end{equation}
The following identity also holds for every $x\in\mathbb{S}^n$:
\begin{equation}\label{addition_formula_sn_1}
   \sum_{j=1}^{m_k}|y^{j-}_{k}(x)|^2=\sum_{j=1}^{m_k}|y^{j+}_{k}(x)|^2=m_k.
\end{equation}
As before we note that this identity differs slightly from the one present in  \cite{Camporesi} as we are considering the normalised surface measure on the sphere.

As in the proof of Theorem \ref{teos2}, consider  the following notation for labeling these eigenfunctions and corresponding eigenvalues of the projection onto the positive spectrum of the Dirac operator with a single subscript counting multiplicities:
    \begin{equation*}
        \{y_j\}_{j=1}^\infty=\left\{y_{k}^{j+},\dots\right\},
    \end{equation*}
    and
    \begin{equation*}
        (\lambda_j)_{j=1}^\infty =\left(\frac{n}{2}+k,\dots\right),
    \end{equation*}
    where the positive eigenvalue $\frac{n}{2}+k$ is repeated $m_k$ times.
  Analogous to the proof of Theorem \ref{teos2}, we then proceed to obtain
\begin{align}\label{eqgradient_1}
\sum_{j=1}^N\langle\Lambda^+(\slashed D)\psi_j,\psi_j\rangle&=\int_{\sphn}\int_0^\infty\rho_{P_E^\perp\Gamma_\psi P_E^\perp}(x)dEdx,
    \end{align}
    where
    \begin{equation*}
        \rho_{P_E^\perp\Gamma_\psi P_E^\perp}(x)\defeq\sum_{j=1}^N|P_E^\perp \psi_j(x)|^2.
    \end{equation*}    
    We also have that for a small neighbourhood $B$ around $x_0\in \sphn$, and $\chi_B$ its characteristic function, the following inequality holds:
    \begin{align}\label{ineq_sn_rho_projection}
        \left(\int_B\rho(x)dx\right)^{\frac{1}{2}}
        &\leq\|\Gamma_\psi P_E \chi_B\|_{HS}+\left(\int_B\rho_{P_E^\perp\Gamma_\psi P_E^\perp}(x)dx\right)^{\frac{1}{2}}.
    \end{align}
    Since both $\chi_B$ and $P_E$ are projections and $\|\Gamma_\psi\|=1$, we find that
    \begin{align}
        \|\Gamma_\psi P_E \chi_B\|_{HS}^2&\leq \|P_E\chi_B\|_{HS}^2\notag\\
&=\sum_{\lambda_j<E}\int_{\mathbb{S}^{n}}|y_j(x)|^2\chi_B(x)dx\notag\\
        &=\sum_{\substack{\frac{n}{2}+k<E\\k\in\N_0}}\int_{\mathbb{S}^{n}}\sum_{j=1}^{m_k}|y_{k}^{j+}(x)|^2\chi_B(x)dx\notag\\
&\stackrel{\eqref{addition_formula_sn_1}}{=}|B|\sum_{\substack{\frac{n}{2}+k<E\\k\in\N_0}}2^{\lfloor \frac{n}{2}\rfloor}\cdot\binom{k+n-1}{k}\notag\\
        &\leq 
        \begin{cases}
            |B|2^{\lfloor\frac{n}{2}\rfloor}\frac{1}{n!}(E+\frac{1}{2})^n,\qquad\,\text{if }E>\frac{n}{2},\\
            0,\,\quad\quad\qquad\qquad\qquad\qquad\text{ otherwise,}
        \end{cases}
        \label{ineq_sn_projection}
    \end{align}
     where the last inequality is justified as follows. Note that the largest integer strictly less that $E-\frac{n}{2}$ is given by $\lceil E-\frac{n}{2}\rceil -1$. So for $E>\frac{n}{2}$ we have that
\begin{align*}
     \sum_{\substack{\frac{n}{2}+k<E\\k\in\N_0}}\binom{k+n-1}{k}&=\sum_{k=0}^{\lceil E-\frac{n}{2}\rceil -1}\binom{k+n-1}{k}=
     \binom{\lceil E-\textstyle{\frac{n}{2}}\rceil -1+n}{\lceil E-\frac{n}{2}\rceil -1}\\
     &=\frac{1}{n!}((\lceil E-\textstyle{\frac{n}{2}}\rceil -1+n)\cdots (\lceil E-\frac{n}{2}\rceil ))\\
     &\leq\frac{1}{n!}\left(\frac{n(\lceil E-\frac{n}{2}\rceil)+\frac{n(n-1)}{2}}{n}\right)^n\\
     &=\frac{1}{n!}\left({\lceil E-\textstyle{\frac{n}{2}}\rceil+\frac{(n-1)}{2}}\right)^n\\
     &<\frac{1}{n!}\left({ E-\frac{n}{2}+1+\frac{(n-1)}{2}}\right)^n\\
     &=\frac{1}{n!}\left({ E+\frac{1}{2}}\right)^n,
\end{align*}
where we applied the AM-GM inequality in the third line. Next, let
    \begin{equation}\label{eq_def_C(E)}
         C(E) \defeq \begin{cases}
            (E+\frac{1}{2})^n,\,\quad\quad\qquad\text{if }E>\frac{n}{2}\\
            0,\,\qquad\qquad\qquad\quad\text{ otherwise.}
        \end{cases}
    \end{equation}

Substituting inequality \eqref{ineq_sn_projection} in \eqref{ineq_sn_rho_projection}, dividing both sides by $|B|^{\frac{1}{2}}$, and letting $|B|\to 0$, we obtain
    \begin{equation*}
        \rho(x_0)^{1/2}\leq (K_n)^{\frac{1}{2}}C(E)^{1/2}+\rho_{P_E^\perp\Gamma P_E^\perp}(x_0)^{1/2}, 
    \end{equation*}
    for almost every $x_0\in \sphn$ where
    \begin{equation}\label{K_n_2}
    K_n=2^{\lfloor\frac{n}{2}\rfloor}\frac{1}{n!}.
    \end{equation}
    Since $\rho_{P_E^\perp\Gamma P_E^\perp}\geq0$, this implies
    \begin{equation*}
        \rho_{P_E^\perp\Gamma P_E^\perp}(x_0)\geq (\rho(x_0)^{1/2}-(K_n)^{\frac{1}{2}}C(E)^{1/2})_+^2,
    \end{equation*}
    for almost every $x_0\in \sphn$, where the expression $(\rho(x_0)^{1/2}-(K_n)^{\frac{1}{2}}C(E)^{1/2})_+$ denotes the non-negative part of the difference. Integration over $x_0\in \sphn$ and equality \eqref{eqgradient_1} implies
    \begin{equation}\label{eqgradientfinal_2}
         \sum_{j=1}^N\langle\Lambda ^+(\slashed D)\psi_j,\psi_j\rangle\geq \int_{\sphn}\int_0^\infty(\rho(x)^{1/2}-(K_n)^{\frac{1}{2}}C(E)^{1/2})_+^2 dEdx.
    \end{equation}
    Therefore, what is left is to estimate this last integral. We proceed as follows: for $\rho> 0$, define
    \begin{equation}\label{Irho_2}
        I(\rho)\defeq\int_0^\infty(\rho^{1/2}-(K_n)^{\frac{1}{2}}C(E)^{1/2})_+^2 dE.
    \end{equation}
     Then for $\rho_n=\rho/K_n$ and $I_0(\rho_n)\defeq \int_0^\infty({\rho_n}^{1/2}-C(E)^{1/2})_+^2 dE$, we have that
        \begin{equation}
        I_0(\rho_n)\cdot K_n=\int_0^\infty({\rho}^{1/2}K_n^{-1/2}-C(E)^{1/2})_+^2 dE\cdot K_n=I(\rho).
    \end{equation}
Then since $C(E)=0$ for $0<E<\frac{n}{2}$ and $E\mapsto (E+\frac{1}{2})^n$ is increasing, for $\rho> (\frac{n}{2}+\frac{1}{2})^n=(\frac{n+1}{2})^n$ we have that
    \begin{align*}
        I_0(\rho_n)&\geq \int_0^{\frac{n}{2}}({\rho_n}^{1/2}-0)^2dE+\int_{\frac{n}{2}}^{+\infty}\left({\rho_n}^\frac{1}{2}-{\left(E+\textstyle{\frac{1}{2}}\right)^\frac{n}{2}}\right)_+^2dE\\
        &=\frac{n}{2}\rho_n+\int_{\frac{n}{2}}^{\sqrt[n]{{\rho_n}}-\textstyle{\frac{1}{2}}}(\rho_n^{1/2}-(E+\textstyle{\frac{1}{2}})^{\frac{n}{2}})^2dE\\
        &=\frac{n}{2}\rho_n+\left(\frac{\sqrt{\rho_n}(n+1)^{\frac{n}{2}+1}}{2^{\frac{n}{2}-1}(n+2)}+\frac{\rho_n^{\frac{n+1}{n}}n^2}{n^2+3n+2}-\left(\frac{n+1}{2}\right)\rho_n-\frac{(n+1)^n}{2^{n+1}}\right)\\
        &=\frac{\sqrt{\rho_n}(n+1)^{\frac{n}{2}+1}}{2^{\frac{n}{2}-1}(n+2)}+\frac{\rho_n^{\frac{n+1}{n}}n^2}{n^2+3n+2}-\frac{\rho_n}{2}-\frac{(n+1)^n}{2^{n+1}}=\vcentcolon I_1(\rho_n).
\end{align*}
 But notice that 
\begin{align*}
    \frac{d}{dx}\frac{I_1(x)}{x^{\frac{n+1}{n}}} &= \frac{(n+1)^{n+1}}{n \cdot 2^{n+1}} x^{-\frac{2n+1}{n}} + \frac{1}{2n} x^{-\frac{n+1}{n}} - \frac{(n+1)^{\frac{n}{2}+1}}{n \cdot 2^{\frac{n}{2}}} x^{-\frac{3n+2}{2n}}\\
    &=\frac{1}{n}x^{-\frac{3n+2}{2n}}\left(\frac{(n+1)^{n+1}}{2^{n+1}}x^{-\frac{1}{2}}+\frac{1}{2}x^{\frac{1}{2}}-\frac{(n+1)^{\frac{n+2}{2}}}{2^{\frac{n}{2}}}\right)\\
    &=\vcentcolon f_n(x).
\end{align*}
Hence the critical points of $\frac{I_1(x)}{x^{\frac{n+1}{n}}}$ are given by the roots of $f_n(x)$, for $x>0$. By performing the change of variables $x={u}^2$, finding the positive roots of $f_n(x)=0$ amounts to solving the quadratic equation in $u$ given by:
\begin{equation}\label{eq_quadratic_1}
    2^n u^2-2^{\frac{n+2}{2}}(n+1)^{\frac{n+2}{2}}u+(n+1)^{n+1}=0.
\end{equation}
The roots of this equation are:
\begin{equation*}
    u_{\pm}=\left(\frac{n+1}{2}\right)^{\frac{n}{2}}\sqrt{n+1}(\sqrt{n+1}\pm\sqrt{n}),
\end{equation*}
corresponding to the critical points
\begin{equation*}
    x_{\pm}=\left(\frac{n+1}{2}\right)^{{n}}({n+1})(\sqrt{n+1}\pm\sqrt{n})^{2}.
\end{equation*}
Next, note that 
\begin{align*}
    f_n(u^2)=\frac{1}{n}u^{-\frac{3n+2}{n}}\frac{2^n u^2-2^{\frac{n+2}{2}}(n+1)^{\frac{n+2}{2}}u+(n+1)^{n+1}}{2^{n+1}u},
\end{align*}
hence its sign is also determined by the previous upward pointing quadratic \eqref{eq_quadratic_1}. Substituting $u=\sqrt{(\frac{n+1}{2})^n}$ in \eqref{eq_quadratic_1}, we obtain
\begin{align*}
    (n+1)^n-(n+1)^{n+1}<0,
\end{align*}
so $f_n({\textstyle{(\frac{n+1}{2})^n}})<0$ and $(\frac{n+1}{2})^n$ lies between the roots $x_{\pm}$. We conclude that $f_n$ is negative for $(\frac{n+1}{2})^n<x<x_+$ and positive for $x>x_+$, hence the point $x_+$ corresponds to a minimum for $\frac{I_1(x)}{x^{\frac{n+1}{n}}}$ for $x>(\frac{n+1}{2})^n$.

Therefore, 
\begin{align*}
    \inf_{\rho_n >(\textstyle{\frac{n+1}{2}})^n}\frac{I_0(\rho_n)}{\rho_n^{\frac{n+1}{n}}}&\geq \inf_{\rho_n >(\textstyle{\frac{n+1}{2}})^n}\frac{I_1(\rho_n)}{\rho_n^{\frac{n+1}{n}}}\\
    &=\frac{I_1(x_+)}{x_+^{\frac{n+1}{n}}}\\
    &=\frac{(\left(\frac{n+1}{2}\right)^{{n}}({n+1})(\sqrt{n+1}+\sqrt{n})^{2})^{\frac{1}{2}-\frac{n+1}{n}}(n+1)^{\frac{n}{2}+1}}{2^{\frac{n}{2}-1}(n+2)}+\frac{n^2}{n^2+3n+2}\\
    &\phantom{=}-\frac{(\left({\textstyle{\frac{n+1}{2}}}\right)^{{n}}({n+1})(\sqrt{n+1}+\sqrt{n})^{2})^{-\frac{1}{n}}}{2}\\
    &\phantom{=}-\frac{(n+1)^n}{2^{n+1}}(\left({\textstyle{\frac{n+1}{2}}}\right)^{{n}}({n+1})(\sqrt{n+1}+\sqrt{n})^2)^{-\frac{n+1}{n}}\\
    &= \frac{n^2}{n^2+3n+2}-\frac{(\sqrt{n+1}-\sqrt{n})^{\frac{2}{n}}}{{(n+1)^{\frac{n+1}{n}}}}\frac{n(2\sqrt{n(n+1)}-n)}{n^2+3n+2}\\
    &=\vcentcolon c_n.
\end{align*}

On the other hand, for $0<\rho_n\leq (\frac{n+1}{2})^n$, we have that
\begin{align*}
    I_0(\rho_n)/(\rho_n)^{\frac{n+1}{n}}=\frac{n}{2}\rho_n^{-\frac{1}{n}}\geq \frac{n}{2}\left(\frac{n+1}{2}\right)^{-1}=\frac{n}{n+1}.
\end{align*}
Note that $c_n<\frac{n}{n+1}$, for every $n\in\mathbb{N}$. Indeed, this is equivalent to proving that $c_n-\frac{n}{n+1}<0$, or equivalently, that
\begin{equation*}
    S(n)<\frac{n}{n+1}-\frac{n^2}{n^2+3n+2}=\frac{2n}{n^2+3n+2},
\end{equation*}
where
\begin{equation*}
    S(n)=-\frac{(\sqrt{n+1}-\sqrt{n})^{\frac{2}{n}}}{{(n+1)^{\frac{n+1}{n}}}}\frac{n(2\sqrt{n(n+1)}-n)}{n^2+3n+2}.
\end{equation*}
Therefore it is sufficient to prove that $S(n)$ is always negative. But this is clear since  $\sqrt{n(n+1)}>n$, for every $n\in\N$.

Consequently,  we have that 
\begin{align}\label{M_n_2}
    I_0(\rho_n)/(\rho_n)^{\frac{n+1}{n}}\geq\min\left\{c_n,\frac{n}{n+1}\right\}=c_n,
\end{align}
for every $\rho_n>0$, from which it follows that
\begin{align}\label{eq_I(rho)_relations}
    \frac{I(\rho)}{\rho^{\frac{n+1}{n}}}=\frac{I_0(\rho_n)\cdot K_n}{(\rho_n)^{\frac{n+1}{n}}\cdot (K_n)^{\frac{n+1}{n}}}\geq c_n\cdot (K_n)^{-\frac{1}{n}},
\end{align}
for every $\rho>0$.
Therefore, from equations \eqref{eqgradientfinal_2}, \eqref{Irho_2}, \eqref{M_n_2}, \eqref{eq_I(rho)_relations}  and the definition of $K_n$, we obtain that
\begin{equation*}
    \sum_{j=1}^N\langle\Lambda^+(\slashed D)\psi_j,\psi_j\rangle\geq c_n\cdot 
     {{n!}^{\frac{1}{n}}2^{-\frac{1}{n}\lfloor \frac{n}{2}\rfloor}}\int_{\sphn}\rho^{\frac{n+1}{n}}(x)dx,
\end{equation*}
concluding the proof.
\end{proof}

\begin{proof}[Proof of Theorem \ref{teosn2}]
The proof follows an argument analogous to the proof of Theorem \ref{teosn}, by performing the following adaptations:

Consider the following alternative notation for labeling the non-constant eigenfunctions and corresponding eigenvalues of the Dirac operator with a single subscript counting multiplicities:   
  \begin{equation*}
        \{y_j^{\pm}\}_{j=1}^\infty=\left\{y_{k}^{j\pm},\dots\right\},
    \end{equation*}
    and
     \begin{equation*}
        (\lambda_j)_{j=1}^\infty =\left(\frac{n}{2}+k,\dots\right),
    \end{equation*}
    where the positive eigenvalue $\frac{n}{2}+k$ is repeated $m_k$ times, and $m_k$ is given as in \eqref{eq_mk_def}.
Once again, for $E\geq 0$ consider the corresponding spectral projections:
    \begin{equation*}
        P_E=\sum_{\lambda_j<E}[\langle\cdot,y_j^+\rangle y_j^++\langle\cdot,y_j^-\rangle y_j^-]
    \end{equation*}
    and
    \begin{equation*}
        P_E^\perp=\sum_{\lambda_j\geq E}[\langle\cdot,y_j^+\rangle y_j^++\langle\cdot,y_j^-\rangle y_j^-],
    \end{equation*}
    so that
\begin{equation}\label{lapnoncte2_2}
        \dot{\slashed D} =\sum_{j=1}^\infty [\lambda_j\langle\cdot,y_j^+\rangle y_j^+-\lambda_j\langle\cdot,y_j^-\rangle y_j^-],
    \end{equation}
    where $ \dot{\slashed D}=P_{\lambda_1}^\perp\circ\slashed D\circ P_{\lambda_1}^\perp.$ denotes the restriction of $ \dot{\slashed D}$ to the invariant subspace of functions orthogonal to constants.
    
Thus, analogously as before, we obtain 
\begin{align*}
    \langle \slashed D\psi,\slashed D \psi \rangle=2\sum_{j=1}^N\int_{\sphn}\int_0^\infty E\cdot|P_E^\perp \psi_j(x)|^2dEdx.
    \end{align*}
Once again, with $\Gamma_\psi\vcentcolon=\sum_{j=1}^N\langle\cdot,\psi_j\rangle\psi_j,$
    we obtain
    \begin{align}\label{eq_dirac_norm}
\|\slashed D\psi\|_{L^2(\R_{0,d})}^2 &=2\int_{\sph}\int_0^\infty E\cdot\rho_{P_E^\perp\Gamma_\psi P_E^\perp}(x)dEdx,
    \end{align}
    where
 $\rho_{P_E^\perp\Gamma_\psi P_E^\perp}(x)\defeq\sum_{j=1}^N|P_E^\perp \psi_j(x)|^2,$ and for any small neighbourhood $B$ centered at $x_0\in \sph$, we have that
    \begin{align}\label{eq22_2}
        \left(\int_B\rho(x)dx\right)^{\frac{1}{2}}&=\|\Gamma_\psi\chi_{B}\|_{HS}\notag\\
        &\leq \|\Gamma_\psi P_E\chi_B\|_{HS}+\|\Gamma_\psi P_E^\perp\chi_B\|_{HS}\notag\\
        &=\|\Gamma_\psi P_E \chi_B\|_{HS}+\left(\int_B\rho_{P_E^\perp\Gamma_\psi P_E^\perp}(x)dx\right)^{\frac{1}{2}},
    \end{align}
    where $\chi_B$ denotes the characteristic function of $B$.
   Then, similarly to  the previous proof, we obtain
    \begin{align}
        \|\Gamma_\psi P_E \chi_B\|_{HS}^2&\leq \|P_E\chi_B\|_{HS}^2\notag\\
&=\sum_{\lambda_j<E}\int_{\mathbb{S}^{n}}(|y_j^+(x)|^2+|y_j^-(x)|^2)\chi_B(x)dx\notag\\
        &=\sum_{\substack{\frac{n}{2}+k<E\\k\in\N_0}}\int_{\mathbb{S}^{n}}\sum_{j=1}^{m_k}(|y_{k}^{j+}(x)|^2+|y_k^{j-}(x)|^2)\chi_B(x)dx\notag\\
&\stackrel{\eqref{addition_formula_sn_1}}{=}|B|\sum_{\substack{\frac{n}{2}+k<E\\k\in\N_0}}2^{(\lfloor \frac{n}{2}\rfloor+1)}\cdot\binom{k+n-1}{k}\notag\\
        &\leq 
        \begin{cases}
            |B|2^{(\lfloor\frac{n}{2}\rfloor+1)}\frac{1}{n!}(E+\frac{1}{2})^n,\qquad\,\text{if }E>\frac{n}{2},\\
            0,\,\quad\quad\qquad\qquad\qquad\qquad\text{ otherwise.}
        \end{cases}
        \label{ineq_sn_projection_2}
    \end{align}

Proceeding as in the proof of Theorem \ref{teosn}, we obtain
    \begin{equation*}
        \rho_{P_E^\perp\Gamma P_E^\perp}(x_0)\geq (\rho(x_0)^{1/2}-(K_n')^{\frac{1}{2}}C(E)^{1/2})_+^2,
    \end{equation*}
    for almost every $x_0\in \sphn$, where the $K_n'=2^{(\lfloor\frac{n}{2}\rfloor+1)}\frac{1}{n!}$ and $E\mapsto C(E)$ is given by \eqref{eq_def_C(E)}. Integrating the expression above, equality \eqref{eq_dirac_norm} then implies that
\begin{equation}\label{eqgradientfinal_2_2}
         \sum_{j=1}^N\langle\slashed D\psi_j,\slashed D\psi_j\rangle\geq 2\int_{\sphn}\int_0^\infty E\cdot (\rho(x)^{1/2}-(K_n')^{\frac{1}{2}}C(E)^{1/2})_+^2 dEdx.
    \end{equation}
    In order to estimate this integral, let
     \begin{equation}\label{Irho_22}
        I(\rho)\defeq\int_0^\infty E\cdot (\rho^{1/2}-(K_n')^{\frac{1}{2}}C(E)^{1/2})_+^2 dE.
    \end{equation}
     Then putting $\rho_n=\rho/K_n'$ it is enough to estimate $I_0(\rho_n)\defeq  \int_0^\infty E\cdot({\rho_n}^{1/2}-C(E)^{1/2})_+^2 dE$.

Since $C(E)=0$ for $0<E<\frac{n}{2}$, and $E\mapsto (E+\frac{1}{2})^n$ is increasing, for $\rho_n> (\frac{n}{2}+\frac{1}{2})^n=(\frac{n+1}{2})^n$ we have that
    \begin{align*}
        I_0(\rho_n)&\geq \int_0^{\frac{n}{2}}E\cdot ({\rho_n}^{\frac{1}{2}}-0)^2dE+\int_{\frac{n}{2}}^{+\infty}E\cdot \left({\rho_n}^{\frac{1}{2}}-{\left(E+\textstyle{\frac{1}{2}}\right)^\frac{n}{2}}\right)_+^2dE\\
        &=\frac{n^2}{8}\rho_n+\int_{\frac{n}{2}}^{\sqrt[n]{{\rho_n}}-\textstyle{\frac{1}{2}}}E\rho_n-2\rho_n^{\frac{1}{2}}E(E+{\textstyle{\frac{1}{2}}})^{\frac{n}{2}}+E(E+\frac{1}{2})^{n}dE\\
        &=\frac{\rho_n}{8} + \frac{n^2\rho_n}{2(n+2)}\left(\frac{\rho_n^{2/n}}{n+4} - \frac{\rho_n^{1/n}}{n+1}\right) \\
        &+ 2\sqrt{\rho_n}\left(\frac{2}{n+4}\left(\frac{n+1}{2}\right)^{\frac{n+4}{2}} - \frac{1}{n+2}\left(\frac{n+1}{2}\right)^{\frac{n+2}{2}}\right) \\
        &- \frac{(n+1)^n(n^2+n-1)}{2^{n+2}(n+2)}\\
        &=\vcentcolon I_1(\rho_n).
\end{align*}
Setting 
\begin{align*}
    A_n&=\frac{2}{n+4}\left(\frac{n+1}{2}\right)^{\frac{n+4}{2}} - \frac{1}{n+2}\left(\frac{n+1}{2}\right)^{\frac{n+2}{2}},\\
    B_n&=\frac{(n+1)^n(n^2+n-1)}{2^{n+2}(n+2)},
\end{align*}
for every $n\in\N$, we obtain that 
\begin{align*}
    \frac{I_1(x)}{x^{\frac{n+2}{n}}}=\frac{n^2}{2(n+2)(n+4)}-\frac{n^2}{2(n+1)(n+2)}x^{-\frac{1}{n}}+\frac{1}{8}x^{-\frac{2}{n}}+2A_n x^{-\frac{n+4}{2n}}-B_n x^{-\frac{n+2}{n}},
\end{align*}
for every $x>0$. Let $y=x^{-\frac{1}{n}}$. Then 
\begin{align*}
     \frac{I_1(x)}{x^{\frac{n+2}{n}}}= \frac{I_1(y(x))}{y(x)^{\frac{n+2}{n}}}&=\frac{1}{8}y^2-\frac{n^2}{2(n+1)(n+2)}y+\frac{n^2}{2(n+2)(n+4)}+2A_n y^{\frac{n+4}{2}}-B_n y^{n+2}\\
     &=q(y)+R(y),
\end{align*}
where $q(y)$ corresponds to the quadratic
\begin{equation*}
    q(y)=\frac{1}{8}y^2-\frac{n^2}{2(n+1)(n+2)}y+\frac{n^2}{2(n+2)(n+4)},
\end{equation*}
while $R(y)$ corresponds to the remainder
\begin{equation*}
    R(y)=2A_n y^{\frac{n+4}{2}}-B_n y^{n+2}.
\end{equation*}
Note that $q(y)$  attains a global minimum at $y_0=\frac{2n^2}{(n+1)(n+2)}$. For $n=1$,  $y_0=1/3$ corresponds to $x=3>((1+1)/2)^1=1$. For $n\geq 2$, note that $x>(\frac{n+1}{2})^n$ if and only if $y\in (0,\frac{2}{n+1})$. Moreover, 
\begin{equation*}
    \frac{2n^2}{(n+1)(n+2)}=\frac{2}{(n+1)}\frac{n^2}{(n+2)}\geq \frac{2}{(n+1)},
\end{equation*}
therefore  $q(y(x))$ has a global minimum for $x_0<(\frac{n+1}{2})^n$, and since $q$ is an upward pointing parabola, we conclude that 
\begin{equation*}
    \inf_{x>(\frac{n+1}{2})^n}q(y(x))=\begin{cases}
           q\Big(y\Big({(\textstyle{\frac{n+1}{2}}})^n\Big)\Big)= \frac{n^4-6n^2+6n+8}{2n^4+16n^3+42n^2+44n+16},&\text{ if }n\geq 2,\\
           q(1/3)=\frac{7}{360}&\text{ if }n=1.
    \end{cases} 
\end{equation*}
Next note that $R(y)>0$ for every $x>(\frac{n+1}{2})^n$. Indeed, this is equivalent to $2A_n-B_nx^{-\frac{1}{2}}>0$ for all such $x$. But since $x^{-\frac{1}{2}}$ is decreasing and 
\begin{align*}
   2A_n-B_n\left(\Big(\frac{n+1}{2}\Big)^n\right)^{-\frac{1}{2}}&=2\left(\frac{n+1}{2}\right)^{\frac{n+2}{2}}\frac{n^2+2n-2}{(n+2)(n+4)} -\frac{(n+1)^{\frac{n}{2}}(n^2+n-1)}{2^{\frac{n}{2}+2}(n+2)}\\
   &=\left(\frac{n+1}{2}\right)^{\frac{n}{2}}\frac{1}{(n+2)}\left(\frac{(n+1)(n^2+2n-2)}{n+4}-\frac{n^2+n-1}{4}\right)\\
   &=\left(\frac{n+1}{2}\right)^{\frac{n}{2}}\frac{1}{(n+2)}\left(\frac{3n^3+7n^2-3n-4}{4(n+4)}\right),
\end{align*}
and $7n^2\geq 3n+4$ for every $n\geq 2$, we conclude that $R(y)>0$ for every $n\in\N$ (the case $n=1$ is also true by inspection). Consequently, 
\begin{equation}\label{lower_bound_norm}
   \inf_{x>(\frac{n+1}{2})^n}\frac{I_1(x)}{x^{\frac{n+2}{n}}}\geq  \inf_{x>(\frac{n+1}{2})^n}q(y(x))=\begin{cases}
            \frac{n^4-6n^2+6n+8}{2n^4+16n^3+42n^2+44n+16},&\text{ if }n\geq 2,\\
           \frac{7}{360}&\text{ if } n=1.
    \end{cases} 
\end{equation}
 Similarly, for $0<\rho_n\leq (\frac{n+1}{2})^n$ we have that
\begin{align*}
    I_0(\rho_n)&= \int_0^{\frac{n}{2}}E\cdot ({\rho_n}^{\frac{1}{2}}-0)^2dE\\&=\frac{n^2}{8}\rho_n,
\end{align*}
so 
\begin{equation*}
    \frac{I_0(\rho_n)}{\rho_n^{\frac{n+2}{n}}}=\frac{n^2}{8}\rho_n^{-\frac{2}{n}}\geq \frac{n^2}{2(n+1)^2},
\end{equation*}
for $0<\rho_n\leq  (\frac{n+1}{2})^n$. One can verify that the lower bound obtained in \eqref{lower_bound_norm} is smaller than $\frac{n^2}{2(n+1)^2}$, for every $n\in\N$.
Therefore, 
\begin{equation*}
    \frac{I_0(\rho_n)}{\rho_n^{\frac{n+2}{n}}}\geq c'_n\vcentcolon=\begin{cases}
           \frac{n^4-6n^2+6n+8}{2n^4+16n^3+42n^2+44n+16},&\text{ if }n\geq 2,\\
           \frac{7}{360}&\text{ if } n=1.
    \end{cases},
\end{equation*}
for all $\rho_n>0$. Hence,
\begin{align}\label{eq_I(rho)_relations_2norm}
    \frac{I(\rho)}{\rho^{\frac{n+2}{n}}}=\frac{I_0(\rho_n)\cdot K_n'}{\rho_n^{\frac{n+2}{n}}\cdot (K_n')^{\frac{n+2}{n}}}\geq c'_n (K_n')^{-\frac{2}{n}},
\end{align}
for every $\rho>0$. From \eqref{eqgradientfinal_2_2} and \eqref{Irho_22} we conclude that
\begin{equation*}
    \sum_{j=1}^N\langle\slashed D\psi_j,\slashed D\psi_j\rangle\geq c'_nn!^{\frac{2}{n}}2^{1-\frac{2}{n}(\lfloor\frac{n}{2}\rfloor+1)}\int_{\sphn}\rho^{\frac{n+2}{n}}(x)dx,
\end{equation*}
completing the proof.

\end{proof}

\bibliographystyle{plain}
\bibliography{references}

\end{document}